\documentclass[10pt,a4paper]{elsarticle}
\usepackage{amsmath}
\usepackage{amsfonts}
\usepackage{amssymb}
\usepackage{graphicx}
\usepackage{caption}
\usepackage{subfig}
\usepackage{tabularx}
\usepackage{color}
\usepackage{algorithmicx} 
\usepackage{algorithm}
\usepackage{amsthm}
\usepackage{float}
\usepackage[toc,page]{appendix}
\newtheorem{theorem}{Theorem}[section]
\newtheorem{lemma}[theorem]{Lemma}
\newtheorem{proposition}[theorem]{Proposition}
\newtheorem{corollary}[theorem]{Corollary}

\journal{Computers and Fluids}

\begin{document}

\begin{frontmatter}


\title{A simplified Cauchy-Kowalewskaya procedure for the implicit solution of generalized Riemann problems of hyperbolic balance laws}

\author[mymainaddress]{Gino I. Montecinos\corref{mycorrespondingauthor}}
\cortext[mycorrespondingauthor]{Corresponding author}
\ead{gino.montecinos@uaysen.cl}
\author[mysecondaryaddress]{Dinshaw S. Balsara}

\address[mymainaddress]{Department of Natural Sciences and Technology, Universidad de Ayse\'en, Coyhaique, Chile}
\address[mysecondaryaddress]{Department of Physics, University of Notre Dame, USA}

\begin{abstract}

The Cauchy-Kowalewskaya (CK) procedure is a key building block in the design of solvers for the Generalised Rieman Problem (GRP) based on Taylor series expansions in time.  The CK procedure allows us to express time derivatives in terms of purely space derivatives. This is a very cumbersome procedure, which often requires the use of software manipulators.
In this paper, a simplification of the CK procedure is proposed in the context of implicit Taylor series expansion for GRP, for hyperbolic balance laws in the framework of [Journal of Computational Physics 303 (2015) 146-172]. A recursive formula for the CK procedure, which is straightforwardly implemented in computational codes, is obtained.
The proposed GRP solver is used in the context of the ADER approach and several one-dimensional problems are solved to demonstrate the applicability and efficiency of the present scheme.  An enhancement in terms of efficiency, is obtained. Furthermore, the expected theoretical orders of accuracy are achieved, conciliating accuracy and stability. 

\end{abstract}

\begin{keyword}

Finite volume schemes \sep ADER schemes \sep Generalized Riemann Problems \sep stiff source terms.

\end{keyword}

\end{frontmatter}

\section{Introduction}

This paper concerns the solution of Generalized Riemann Problems (GRP) in the context of high-order finite volume methods. The ADER (Arbitrary Accuracy DERivative Riemann problem method), first put forward by Toro et al. \cite{Millington:2001a}, is of particular interest in this work. The method in \cite{Millington:2001a} was devoted to develop a procedure able to compute the numerical solution, of the one-dimensional linear advection problem, of arbitrary order of accuracy in both space and time. This method can be considered as a generalization of Godunov's method, where the numerical fluxes can be obtained from the local solution of GRP where the initial condition
consists of polynomial functions of suitable order. Subsequently, ADER was extended to solve linear systems of hyperbolic conservation laws in \cite{Schwartzkopff:2002a, Toro:2001c}. The ADER philosophy was extended by Toro and Titarev in \cite{Toro:2002a} to solve the non-linear systems; inhomogeneous Burgers equation and the nonlinear shallow-water equations with variable bed elevation. In \cite{Titarev:2002a}, the ADER approach was extended, by Toro and Titarev, to nonlinear but homogeneous hyperbolic systems. Furthermore, the extension of ADER to scalar balance laws was investigated by Toro and Takakura, \cite{Takakura:2003a}.

The original ADER scheme \cite{Millington:2001a, Titarev:2002a}, has two main steps, reconstruction and flux calculation. The marching in time generates cell averages, then the reconstruction procedure generates a special type of interpolation polynomial of the solution from cell averages.  The flux calculation is carried out from the solution of the GRP, which is proposed in terms of a Taylor series expansion in time, where the time derivatives are completely expressed in terms of spatial derivatives by means of the Cauchy-Kowalewskaya or Lax-Wendroff procedure. The space derivatives are obtained from homogeneous linearized Riemann problems constructed from the governing equation and an initial condition given by the derivatives of interpolation polynomials. This was the summary of the pioneering ADER method, which in principle is able to generate approximations of arbitrary order of accuracy. 

The accuracy of ADER methods depend on the number of terms in the Taylor series expansions.  In the particular case of the first order, it recovers the Godunov method and for the second order, ADER recovers the second-order GRP method of Ben-Artzi and Falcoviz \cite{BenArtzi:1984a}.  In \cite{Castro:2008a}, a re-interpretation carried out by Castro and Toro of the high-order numerical method proposed by Harten et al. \cite{Harten:1987a} has allowed us to formulate GRP solutions and thus ADER schemes in a different way. In this new interpretation of ADER, a classical Riemann problem is built from the governing equation and a piece-wise initial condition which is formed from two constant states. In this approach, the constant states are local predictors of the solution within computational cells, that is, these are extrapolation values of the solution at both sides of the cell interfaces at a given time. The evolution of these extrapolated values is carried out by using Taylor series expansion in time, where the time derivatives are still expressed via Cauchy-Kowalewskaya functionals but filled with the spatial derivates of the reconstruction interpolation functions. In this form the predictors in two adjacent computational cells are interacted at the cell interface through the classical Riemann problem. In \cite{Castro:2008a}, the approach based on the Harten et al. is called the HEOC solver and the original GRP solver of Toro and Titarev scheme is referred to as the TT solver.  So, the difference between the HEOC formulation and the TT formulation is that, in the HEOC case only one classical Riemann problem is required but it needs to be solved at each quadrature point, whereas, in the TT approach a sequence of classical Riemann problems are required, only once, one for the leading term of Taylor expansions and linearised Riemann problems for the spatial derivatives. So, after these terms are available the computation of GRP solution via TT is reduced jut to evaluate a polynomial in time. The similarities between both approaches are the use of Taylor series expansion and the use of the Cauchy-Kowalewskaya procedure. A detailed review of GRP solvers is done in \cite{Castro:2008a, Montecinos:2012a}. Similarly, further details of ADER schemes can be found in Chapters 19 and 20 of the textbook by Toro \cite{Toro:2009a}. Notice that, the GRP solvers require the ability of solving classical Riemann problems. The number of hyperbolic systems where the exact solution of Riemann problems is available, is limited. In general, the exact solution of Riemann problems for several hyperbolic system can be very difficult to be obtained. Fortunately, the ADER approaches described above can use approximate Riemann solvers, see \cite{Goetz:2012a}.  In this sense the search for approximate Riemann solvers for general hyperbolic system is very relevant area of research, where the ADER philosophy can benefit. Balsara \cite{Balsara:2010a,Balsara:2012b} and Balsara et al. \cite{Balsara:2014a} have extended multidimensional HLL and HLLC to Euler and MHD equations.   Goetz et al. \cite{Goetz:2018a} have shown that approximate Riemann solver can be obtained from the well-known HLL solver. See also \cite{Dumbser:2016a, Balsara:2018b} where universal approximate GRP solvers based on HLL method and the inclusion of intermediate waves, have been reported.

The ADER approach allows flexibility to incorporate the finite element approach into the finite volume framework. In an intermediate stage in the mixing between finite volume and finite element approach, Dumbser and Munz, \cite{Dumbser:2005a, Dumbser:2006a}, have implemented the ADER approach for the Discontinuous Galerkin approach applied to the aeroacoustics and the Euler equations in two dimensions. In this approach, the ADER is used to evolve polynomials in space and time, through the evolution of their degrees of freedom by using Taylor series expansion and the Cauchy-Kowalewskaya procedure but instead of using reconstruction and the derivatives of reconstruction polynomials, the authors proposed to use the test functions of the finite element space. In this sense, the work of Dumbser et al. \cite{Dumbser:2008a} in their pioneering work, has introduced the Galerkin framework for obtaining the predictor within cells. The difference between this approach and that of Dumbser and Munz is that this neither requires the use of any Taylor series expansion nor Cauchy-Kowalewskaya procedure. Some contributions to the developments of this class of solver can be found in \cite{Dumbser:2008b, Dumbser:2009b, Dumbser:2009c, Goetz:2016a, Montecinos:2017c}, to mention but a few.

The methodologies based on Galerkin approaches require the inversion of matrices and the solution of non-linear algebraic equations which are very time-consuming processes. On the other hand, the methodologies based on Taylor series suffer from the Cauchy-Kowaleskaya procedure which becomes cumbersome when the accuracy increases. Furthermore, the Taylor series expansions without modifications cannot deal with stiff source terms.  In this sense, in \cite{Montecinos:2014c, Toro:2015a} Montecinos and Toro have introduced the implicit Taylor series expansion and Cauchy-Kowakeskaya procedure to deal with hyperbolic balance laws with stiff source terms. In this approach, the Cauchy-Kowakeskaya procedure requires the spatial derivatives evolved in time, which cannot be obtained straightforwardly from Riemann problems as in conventional ADER methods discussed above. This is basically because conventional ADER methods based on TT solver uses linearised Riemann problems which are homogeneous. Thus, the influence of the source term is only involved in the Cauchy-Kowalewskaya procedure. In \cite{Toro:2015a} the spatial derivatives are evolved by using two approaches, which differ in the number of terms in the Taylor expansion, so Complete Implicit Taylor Approach (CITA) and Reduced Implicit Taylor Approach (RITA), in both HEOC and TT approaches, are investigated there.  A limitation of the approach in \cite{Toro:2015a}, is the computational cost, for high orders of accuracy, the CPU time increases dramatically. However, second order approaches work very well as reported in \cite{Vanzo:2016a} where an extension to transport phenomena on unstructured meshes has been reported. 

In this paper, we propose a strategy related to the implicit Taylor series expansion and the Cauchy-Kowalewskaya procedure. The difference between the conentional approaches described above and the proposed one is that spatial derivatives do not need to be evolved and the Cauchy-Kowalewskaya procedure is modified by expressing high-order time derivatives not only in terms of spatial derivatives of the data but also on space and time derivatives of the Jacobian matrices of the flux and source functions as well. The derivatives are obtained from an interpolation fashion on selected nodal points. This simplification allows us to provide a closed form for the Cauchy-Kowalewskaya functionals, in a recursive formula. Furthermore, this approach requires the solution of one algebraic equation but the number of variables is the same for all orders of accuracy. Closed forms of the Cauchy-Kowaleskaya functional are available for some partial differential equations as linear advection systems, linear systems with constant matrices \cite{Schwartzkopff:2004a, Kaser:2006a} and for the non-linear two-dimensional Euler equation, \cite{Dumbser:2006a}. However, the expression obtained here is useful for all hyperbolic balance laws. 

This paper is organized as follows.  In section \ref{section:Framework}, the general framework is presented. In section \ref{section:Predictor}, the new predictor step is introduced. In the section \ref{section:Results}, numerical tests are concerned. Finally,  in section \ref{section:Conclusions} conclusions and remarks are drawn.

\section{The framework}\label{section:Framework}
In this paper we present a strategy for solving a hyperbolic balance law in the conservative form
\begin{eqnarray}
\label{eq:gov-eq-1}
\begin{array}{c}
\partial_t \mathbf{Q} + \partial_x \mathbf{F} ( \mathbf{Q}) = \mathbf{S} ( \mathbf{Q}) \;, \\

\mathbf{Q}(x,0) = \mathbf{H}_0(x)
\;,
\end{array}
\end{eqnarray}
where $ \mathbf{H}_0(x)$ is a prescribed function in $\mathbb{R}^m$. Here $\mathbf{Q}(x,t)\in \mathbb{R}^{m}$ is the vector of unknowns,   $\mathbf{F}(\mathbf{Q})\in \mathbb{R}^{m}$ is the physical flux function and $\mathbf{S}(\mathbf{Q})\in \mathbb{R}^{m}$ is the source term.

To compute a numerical solution of (\ref{eq:gov-eq-1}), we divide computational domains into $N$ uniform cells of the form $ I_i^n := [x_{i-\frac{1}{2}}, x_{i+\frac{1}{2}}]\times [t^{n}, t^{n+1}]$ and then by integrating on $I_i^n$ we obtain the well-known one-step formula.
\begin{eqnarray}
\begin{array}{c}
\mathbf{Q}_i^{n+1} = \mathbf{Q}_i^{n} - \frac{\Delta t}{ \Delta x} \biggl(  \mathbf{F}_{i+\frac{1}{2}} - \mathbf{F}_{i-\frac{1}{2}} \biggr)  + \Delta t \mathbf{S}_i\;,
\end{array}
\end{eqnarray}
where 
\begin{eqnarray}
\begin{array}{c}
\mathbf{Q}_i^{n} = \frac{1}{\Delta x} \displaystyle \int_{x-\frac{1}{2} }^{ x + \frac{1}{2} } 
\mathbf{Q}(x,t^{n} ) dx 
\end{array}
\end{eqnarray}
and the numerical flux $\mathbf{F}_{i+\frac{1}{2} } $ as well as the source term $\mathbf{S}_{i } $ are computed by adopting the ADER strategy, \cite{Millington:2001a, Toro:2001g, Toro:2002a,Toro:2009a}. In this paper
\begin{eqnarray}
\label{eq:flux-source}
\begin{array}{c}

\mathbf{F}_{i+\frac{1}{2} } = 
\frac{1}{ \Delta t} \displaystyle 
\int_{ t^{n} }^{ t^{n+1}}  
\mathbf{F}_h( \mathbf{Q}_{i}(x_{ i+\frac{1}{2} },t), \mathbf{Q}_{i+1} (x_{ i+\frac{1}{2} },t) ) dt\;,

\\

\mathbf{S}_{i} = \frac{1}{\Delta t \Delta x } \displaystyle \int_{t^{n}}^{t^{n+1}} \int_{x-\frac{1}{2}}^{x+\frac{1}{2}}
 \mathbf{S}(\mathbf{Q}_i(x,t) dx dt\;,
\end{array}
\end{eqnarray}
where $\mathbf{F}_h(\mathbf{Q}_L, \mathbf{Q}_R)$  is a classical numerical flux function, which  can be seen as a function of two states $  \mathbf{Q}_L$ and  $ \mathbf{Q}_R$. This can be an approximate resolved flux in a Riemann problem. It is possible to design an approximate GRP solver out of the HLL, \cite{Goetz:2018a}. In this paper we will use the Rusanov solver, obtained from HLL by taking extreme left and right maximum waves speed to be the same but in opposite directions. Here $\mathbf{Q}_i(x,t)$ corresponds to a predictor within the computational cell $I_i^n$.  In the next section, further details of the predictor step are provided.

\section{The predictor step}\label{section:Predictor}

In this section we provide the details to obtain the predictor $\mathbf{Q}_{i}(x,t)$. We adopt the strategy of the implicit Taylor series expansion and the Cauchy-Kowalewskaya procedure presented in \cite{Toro:2015a}. However, instead of the conventional Cauchy-Kowalewskaya procedure we use a simplified version of this procedure. For the sake of completeness, we provide a brief review of the approach in \cite{Toro:2015a}. The predictor is computed as 
\begin{eqnarray}
\begin{array}{c}
\mathbf{Q}_i(x,\tau ) = \mathbf{Q}_i( x, 0_+) - \sum_{k=1}^{M} \frac{(-\tau)^k}{k!} \partial_t^{(k)} \mathbf{Q}_i(x, \tau) \;,
\end{array}
\end{eqnarray}
which by means of the Cauchy-Kowalewskaya procedure, can be written as 
\begin{eqnarray}
\begin{array}{c}
\mathbf{Q}_i(x,\tau ) = \mathbf{Q}_i( x, 0_+) - \sum_{k=1}^{M} \frac{(-\tau)^k}{k!} \mathbf{G}^{(k)}( \mathbf{Q}_i(x, \tau), \partial_x \mathbf{Q}_i(x, \tau), ...,\partial_x^{(k)}\mathbf{Q}_i(x, \tau)  )\;,
\end{array}
\end{eqnarray}
where $M$ is an integer which corresponds to the order of accuracy  $M+1$, in both space and time.  Here, $\mathbf{G}^{(k)}$ corresponds to the Cauchy-Kowalewskaya functional and it is the function which expresses the time derivatives in terms of spatial derivatives, $\partial_t^{(k)} \mathbf{Q}_i(x, \tau) = \mathbf{G}^{(k)}( \mathbf{Q}_i(x, \tau), \partial_x \mathbf{Q}_i(x, \tau), ...,\partial_x^{(k)}\mathbf{Q}_i(x, \tau) )$. Notice that this functional requires the information of spatial derivatives at $\tau$, which must be estimated. In \cite{Toro:2015a} two strategies are proposed, where the time spatial derivatives are obtained from implicit Taylor series as well. These approaches require the solution of algebraic equations.

In the next section we provide a brief review of the conventional Cauchy-Kowalewskaya procedure and subsequently, a simplification of this procedure is presented.

\subsection{A brief review of the Cauchy-Kowalewskaya procedure}\label{sec:review-CK}
Here, we briefly describe the Cauchy-Kowalewskaya procedure for obtaining the time derivatives of the data. For the sake of simplicity, in this section we omit the sub index $i$ in $\mathbf{Q}_i$, to indicate the approximation within the cell $[x_{i-\frac{1}{2}},  x_{i+\frac{1}{2}}].$ 

The time derivatives are obtained from the governing equation (\ref{eq:gov-eq-1}). As for example, the first derivative is given by
\begin{eqnarray}
\label{eq:dtQ}
\begin{array}{c}
\partial_t \mathbf{Q} = - \mathbf{A}(\mathbf{Q}) \partial_x \mathbf{Q} + \mathbf{S}( \mathbf{Q})\;,
\end{array}
\end{eqnarray}
where $\mathbf{A}(\mathbf{Q})$ is the Jacobian matrix of $\mathbf{F}(\mathbf{Q})$ with respect to $\mathbf{Q}$.  So, to obtain the second time derivative we differentiate in time the equation (\ref{eq:dtQ}), so we have
\begin{eqnarray}
\label{eq:dttQ}
\begin{array}{c}
\displaystyle 
\partial_t^{(2)} \mathbf{Q}_i = - \sum_{j=1}^{m} \sum_{l=1}^{m}
\frac{\partial \mathbf{A}_{i,j}(\mathbf{Q}) }{\partial \mathbf{Q}_l} \partial_t \mathbf{Q}_l  \partial_x \mathbf{Q}_j 
- \sum_{j=1}^{m}  \mathbf{A}_{i,j}(\mathbf{Q})  \partial_t ( \partial_x \mathbf{Q}_j ) \\

\displaystyle 
+ \sum_{j=1}^{m} \mathbf{B}( \mathbf{Q})_{i,j} \partial_t \mathbf{Q}_j \;,
\end{array}
\end{eqnarray}
where $\mathbf{B}(\mathbf{Q})$ is the Jacobian matrix of $\mathbf{S}( \mathbf{Q} )$ with respect to $\mathbf{Q}$,  $\partial_t \mathbf{Q}_i$  and  $\partial_t \mathbf{Q}_j$ are the $i$th and the $j$th components of the vector state $\partial_t \mathbf{Q}$, respectively and then by differentiating  the expression (\ref{eq:dtQ}) with respect to $x$, we obtain 
\begin{eqnarray}
\begin{array}{c}
\displaystyle 
\partial_x ( \partial_t \mathbf{Q}_j )
= -\sum_{j=1}^{m} \sum_{l=1}^{m}
\frac{\partial \mathbf{A}_{i,j}(\mathbf{Q}) }{\partial \mathbf{Q}_l} \partial_x \mathbf{Q}_l  \partial_x \mathbf{Q}_j 
-\sum_{j=1}^{m}  \mathbf{A}_{i,j}(\mathbf{Q}) \partial_x^{(2)} \mathbf{Q}_j \\
\displaystyle 
+ \sum_{j=1}^{m} \mathbf{B}( \mathbf{Q})_{i,j} \partial_x \mathbf{Q}_j \;,
\end{array}
\end{eqnarray}
the same procedure is applied to obtain $\partial_t^{(3)} \mathbf{Q}$ and so on, in principle any high order time derivative can be obtained through this procedure.  However, the procedure becomes very cumbersome for derivatives of orders higher than two, furthermore the complexity scales with the number of unknowns $m$ and the order of accuracy as well. This justifies the requirement of finding some efficient strategy to approximate temporal derivatives by following the Cauchy-Kowalewskaya ideas.

\subsection{The simplified Cauchy-kowalewskaya procedure}\label{sec:simplified-CK}

In this section, we derive a simplified Cauchy-kowalewskaya procedure, to approximate time derivatives. 
For the sake of simplicity, in this section we also omit the sub index $i$ in $\mathbf{Q}_i$, to indicate the approximation within the cell $[x_{i-\frac{1}{2}},  x_{i+\frac{1}{2}}].$  As seen in the previous section, the conventional Cauchy-Kowalewsky procedure provides the first time derivative as
\begin{eqnarray}
\begin{array}{c}

\partial_t \mathbf{Q} = - \mathbf{A}(\mathbf{Q})\partial_x \mathbf{Q} + \mathbf{S}(\mathbf{Q}) \;.

\end{array}
\end{eqnarray}
At this point we introduce the {\it first simplification}. Instead of considering the previous equation, we are going to use the approximation

\begin{eqnarray}
\label{eq:simplify-1}
\begin{array}{c}

\partial_t \mathbf{Q} = - \mathbf{A}(x,t)\partial_x \mathbf{Q} + \mathbf{S}(\mathbf{Q}) \;,

\end{array}
\end{eqnarray}
which means, the matrix $\mathbf{A}$ is considered just a space-time dependent matrix.

By taking into account the approximation (\ref{eq:simplify-1}), the second time derivative can be approximated in three steps described below. 

{\bf Step I}. We differentiate $\partial_t \mathbf{Q} $ with respect to $t$, thus
\begin{eqnarray}
\begin{array}{c}

\partial_t ( \partial_t \mathbf{Q}) = - \mathbf{A}_t \partial_x \mathbf{Q} - \mathbf{A} \partial_t ( \partial_x \mathbf{Q})+ \mathbf{ B} \partial_t \mathbf{Q}  \;,

\end{array}
\end{eqnarray}
where $ \mathbf{B}$ is the Jacobian matrix of the source term with respect to $\mathbf{Q}$. For the remaining part of this paper, we use the notation $\partial_x \mathbf{A} = \mathbf{A}_x$ for any matrix $\mathbf{A}$. Similarly, we use the convention $  \partial_x^{(l)} \mathbf{A} = \mathbf{A}_x^{(l)}$ for the $l$-th partial derivative of the matrix $\mathbf{A}$ with respect to $x$. Do not confuse with $\mathbf{A}^{l}$ which means matrix multiplication, $l$ times. 

{\bf Step II}. At this point we introduce the {\it second simplification}, which is to consider also the matrix $\mathbf{B}$ as a space-time dependent matrix rather than a state dependent matrix. We assume regularity enough such that the spatial and time derivatives can be interchanged. So  
\begin{eqnarray}
\label{eq:deriv-eq-ord2}
\begin{array}{c}

\partial_t^{(2)} \mathbf{Q} =
 - \mathbf{A}_t \partial_x \mathbf{Q} - \mathbf{A} \partial_x ( \partial_t \mathbf{Q} ) + \mathbf{ B} \partial_t \mathbf{Q}  \;.

\end{array}
\end{eqnarray}

{\bf Step III}. Here, we differentiate in space the expression $\partial_t \mathbf{Q}$, taking into account the simplifications introduced above, to obtain
\begin{eqnarray}
\begin{array}{c}

\partial_x ( \partial_t \mathbf{Q}) = - \mathbf{A}_x \partial_x \mathbf{Q} - \mathbf{A} \partial_x^{(2)} \mathbf{Q}+ \mathbf{ B} \partial_x \mathbf{Q}  
\\
= - \mathbf{A} \partial_x^{(2)} \mathbf{Q} + (\mathbf{B} - \mathbf{A}_x^{(1)}) \partial_x \mathbf{Q} \;.

\end{array}
\end{eqnarray}
This step regards the main difference with respect to the conventional Cauchy-Kowalewskaya procedure. By inserting the previous expression into (\ref{eq:deriv-eq-ord2}), we obtain 
\begin{eqnarray}
\begin{array}{c}

\partial_t^{(2)} \mathbf{Q} =
 - \mathbf{A}_t \partial_x \mathbf{Q} - \mathbf{A} ( - \mathbf{A} \partial_x^{(2)} \mathbf{Q} + (\mathbf{B} - \mathbf{A}_x^{(1)}) \partial_x \mathbf{Q}   ) + \mathbf{ B} \partial_t \mathbf{Q}  
\\
= \mathbf{A}^{2} \partial_x^{(2)} \mathbf{Q} + ( - \mathbf{A}_t - \mathbf{A} (\mathbf{B} - \mathbf{A}_x^{(1)})  ) \partial_x \mathbf{Q}+ \mathbf{ B} \partial_t \mathbf{Q}  \;.

\end{array}
\end{eqnarray}
The last expression can be written as 
\begin{eqnarray}
\begin{array}{c}

\partial_t^{(2)} \mathbf{Q} = \mathbf{C}(2,2) \partial_x^{(2)} \mathbf{Q} + \mathbf{C}(2,1) \partial_x \mathbf{Q}+ \mathbf{ B} \partial_t \mathbf{Q}  \;,

\end{array}
\end{eqnarray}
where $ \mathbf{C}(2,i)$, $i=1,2$ represent the matrix coefficients
\begin{eqnarray}
\begin{array}{c}

\mathbf{C}(2,2) = \mathbf{A}^2 \;, 
\mathbf{C}(2,1) =   - \mathbf{A}_t - \mathbf{A} (\mathbf{B} - \mathbf{A}_x^{(1)})  \;.

\end{array}
\end{eqnarray}

The novel contribution of this paper is as follows. We are going to show that this procedure can be generalized. Before to give this main result, we need to express $\partial_x^{(l)} (\partial_t \mathbf{Q})$ only in terms of spatial derivatives of the data and the Jacobian matrices. Notice that from simplifications introduced above, we obtain
\begin{eqnarray}
\begin{array}{cl}

\partial_x ( \partial_t \mathbf{Q})
=& - \mathbf{A} \partial_x^{(2)} \mathbf{Q}
+ ( \mathbf{B} - \mathbf{A}_x) \partial_x \mathbf{Q} \;, \\

\partial_x^{ ( 2) } ( \partial_t \mathbf{Q})
=& - \mathbf{A} \partial_x^{(3)} \mathbf{Q} 
+ ( \mathbf{B}   - 2 \mathbf{A}_x)       \partial_x^{(2)} \mathbf{Q}  
+ ( \mathbf{B}_x -   \mathbf{A}_x^{(2)}) \partial_x       \mathbf{Q}  \;, \\

\partial_x^{ ( 3)} ( \partial_t \mathbf{Q})
= & - \mathbf{A} \partial_x^{(4)} \mathbf{Q} 
+ (  \mathbf{B}         - 3 \mathbf{A}_x      ) \partial_x^{(3)} \mathbf{Q}  
+ ( 2\mathbf{B}_x       - 3 \mathbf{A}_x^{(2)}) \partial_x^{(2)} \mathbf{Q} 
\\
& + (  \mathbf{B}_x^{(2)} -   \mathbf{A}_x^{(3)}) \partial_x       \mathbf{Q} 
 \;, \\

\partial_x^{ ( 4) } ( \partial_t \mathbf{Q})
=& - \mathbf{A} \partial_x^{(5)} \mathbf{Q} 
+ ( \mathbf{B}          - 4 \mathbf{A}_x      ) \partial_x^{(4)} \mathbf{Q}  
+ ( 3\mathbf{B}_x       - 6 \mathbf{A}_x^{(2)}) \partial_x^{(3)} \mathbf{Q} \\
& + ( 3\mathbf{B}_x^{(2)} - 4 \mathbf{A}_x^{(3)}) \partial_x^{(2)} \mathbf{Q} 
+ (  \mathbf{B}_x^{(3)} -   \mathbf{A}_x^{(4)}) \partial_x       \mathbf{Q} 
 \;, \\

\partial_x^{ ( 5)} ( \partial_t \mathbf{Q})
=& - \mathbf{A} \partial_x^{(6)} \mathbf{Q} 
+ (  \mathbf{B}         - 5 \mathbf{A}_x      ) \partial_x^{(5)} \mathbf{Q}  
+ ( 4\mathbf{B}_x       -10 \mathbf{A}_x^{(2)}) \partial_x^{(4)} \mathbf{Q} \\
& + ( 6\mathbf{B}_x^{(2)} -10 \mathbf{A}_x^{(3)}) \partial_x^{(3)} \mathbf{Q} 
 + ( 4\mathbf{B}_x^{(3)} - 5 \mathbf{A}_x^{(4)}) \partial_x^{(2)} \mathbf{Q} \\
& + (  \mathbf{B}_x^{(4)} -   \mathbf{A}_x^{(5)}) \partial_x       \mathbf{Q} 
 \;, \\

\end{array}
\end{eqnarray}
so by inspection we observe that these derivatives, we can be arranged  as 
\begin{eqnarray}
\label{eq:time-space-from-only-space}
\begin{array}{c}

\partial_x^{(l)} ( \partial_t \mathbf{Q}) =

\sum_{k=1}^{l+1}(b_{l,k} \mathbf{B}_x^{(l+1-k)} - a_{l, k} \mathbf{A}_x^{(l+2-k)} ) \partial_x^{( k)}\mathbf{Q} \;.

\end{array}
\end{eqnarray}
Notice that,  $\partial_x^{(l)}$ stands by the $l$-th spatial derivative, with the convention $ \partial_x^{(0)} \mathbf{M} = \mathbf{M}$ for any function $\mathbf{M}$, which may be a scalar, vector or matrix function. This notation is also extended to temporal derivatives.

The table \ref{table-coeff-a}, shows the coefficient $a_{l,k}$. Similarly, the table \ref{table-coeff-b} shows the coefficients $b_{l,k}$. We observe that they follow the structure of the Pascal triangle, in the combinatorial theory.  In fact, the structure is given by the following.

\begin{table}
\centering
\begin{tabular}{cccccc|c}
\hline
$ a_{l,(l-4)}$ & $a_{l,(l-3)}$ & $a_{(l,l-2)}$ & $a_{(l, l-1)}$ & $a_{(l,l)}$ & $a_{(l,l+1)}$ & $l$ \\
\hline

0 & 0 &  0 &  0  & 1 &   1 & 1 \\
0 & 0 &  0 &  1  & 2 &   1 & 2 \\
0 & 0 &  1 &  3  & 3 &   1 & 3 \\
0 & 1 &  4 &  6  & 4 &   1 & 4 \\
1 & 5 & 10 & 10  & 5 &   1 & 5 \\
\\
\end{tabular}
\caption{Coefficients $a_{l,k}$ in expression (\ref{eq:time-space-from-only-space}). }\label{table-coeff-a}
\end{table}
\begin{table}
\centering
\begin{tabular}{cccccc|c}
\hline
$ b_{l,(l-4)}$ & $ b_{l,(l-3)}$ & $ b_{(l,l-2)}$ & $ b_{(l, l-1)}$ & $ b_{(l,l)}$ & $ b_{(l,l+1)}$ & $l$ \\
\hline

0 & 0 &  0 &  0  & 0 &   1 & 1 \\
0 & 0 &  0 &  0  & 1 &   1 & 2 \\
0 & 0 &  0 &  1  & 2 &   1 & 3 \\
0 & 0 &  1 &  3  & 3 &   1 & 4 \\
0 & 1 &  4 &  6  & 4 &   1 & 5 \\
\\
\end{tabular}
\caption{Coefficients $b_{l.k}$ in expression (\ref{eq:time-space-from-only-space}). }\label{table-coeff-b}
\end{table}

\begin{lemma}
\begin{eqnarray}
\label{eq:expansion-dxkdt}
\begin{array}{c}

\partial_x^{ ( l)} (\partial_t \mathbf{Q}) = 
\displaystyle 
\sum_{k=1 }^{ l+1} 

\mathbf{D}(l+1, k )

\partial_x^{(k)} \mathbf{Q} \;,
\end{array}
\end{eqnarray}
where 
\begin{eqnarray}
\label{eq:Matrix-D}
\begin{array}{c}

\mathbf{D}(l+1, k ) =  
\biggl(

\left(
\begin{array}{c}
l -1 \\
l - k 
\end{array}
\right)

\mathbf{B}_{x}^{(l-k)}

-

\left(
\begin{array}{c}
l  \\
l + 1 - k 
\end{array}
\right)
\mathbf{A}_{x}^{(l+1-k)}

\biggr)

\end{array}
\end{eqnarray}
and
\begin{eqnarray}
\begin{array}{c}
\left(
\begin{array}{c}
l   \\
- 1
\end{array}
\right) = 0 \;,
\end{array}
\end{eqnarray}
for all integer $l$.
\end{lemma}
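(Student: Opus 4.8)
The plan is to argue by induction on $l$, converting the single $x$-differentiation that passes from each line of the displayed list of $\partial_x^{(l)}(\partial_t\mathbf{Q})$ to the next into an algebraic recurrence among the coefficient matrices $\mathbf{D}(\cdot,\cdot)$ of (\ref{eq:Matrix-D}); that recurrence will then collapse to Pascal's rule.

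For the base case $l=1$ I would start from the simplified relation (\ref{eq:simplify-1}), namely $\partial_t\mathbf{Q} = -\mathbf{A}\,\partial_x\mathbf{Q} + \mathbf{S}$, and differentiate it once in $x$. Invoking the second simplification, under which $\mathbf{B}$ is the space--time Jacobian of the source so that $\partial_x\mathbf{S} = \mathbf{B}\,\partial_x\mathbf{Q}$, this yields $\partial_x(\partial_t\mathbf{Q}) = -\mathbf{A}\,\partial_x^{(2)}\mathbf{Q} + (\mathbf{B}-\mathbf{A}_x)\,\partial_x\mathbf{Q}$. Evaluating (\ref{eq:Matrix-D}) at $l=1$ and using $\binom{0}{-1}=0$ identifies the right-hand side with $\mathbf{D}(2,2)\,\partial_x^{(2)}\mathbf{Q} + \mathbf{D}(2,1)\,\partial_x\mathbf{Q}$, which is (\ref{eq:expansion-dxkdt}) for $l=1$. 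I would stress that this is exactly the step where the bare source term is absorbed into a $\partial_x\mathbf{Q}$ contribution, which is why the representation holds for every $l\ge 1$ and need not carry an inhomogeneous term.

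For the inductive step I would assume (\ref{eq:expansion-dxkdt}) for a given $l$ and apply $\partial_x$ to both sides. The product rule produces a \emph{diagonal} family $\partial_x\mathbf{D}(l+1,k)\,\partial_x^{(k)}\mathbf{Q}$ with $1\le k\le l+1$ and a \emph{shifted} family $\mathbf{D}(l+1,k)\,\partial_x^{(k+1)}\mathbf{Q}$; reindexing the latter by $k\mapsto k-1$ and collecting the coefficient of each $\partial_x^{(k)}\mathbf{Q}$ reduces the claim for $l+1$ to the single matrix recurrence $\mathbf{D}(l+2,k) = \partial_x\mathbf{D}(l+1,k) + \mathbf{D}(l+1,k-1)$. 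The decisive observation is that $\partial_x$ raises the order of every $\mathbf{A}_x^{(\cdot)}$ and $\mathbf{B}_x^{(\cdot)}$ by one while leaving its scalar prefactor untouched; hence, by (\ref{eq:Matrix-D}), the derivative orders on the two sides already agree termwise, and the recurrence splits into the two purely numerical identities $\binom{l}{l+1-k} = \binom{l-1}{l-k} + \binom{l-1}{l+1-k}$ for the $\mathbf{B}_x$ part and $\binom{l+1}{l+2-k} = \binom{l}{l+1-k} + \binom{l}{l+2-k}$ for the $\mathbf{A}_x$ part, each an instance of Pascal's rule.

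It then remains only to dispose of the two ends of the sum. At the top index $k=1$ the shifted family is absent, and a direct evaluation of (\ref{eq:Matrix-D}) gives $\mathbf{D}(l+2,1) = \mathbf{B}_x^{(l)} - \mathbf{A}_x^{(l+1)} = \partial_x\mathbf{D}(l+1,1)$; at the bottom index $k=l+2$ the diagonal family is absent, and the convention $\binom{l}{-1}=0$ forces $\mathbf{D}(l+2,l+2) = -\mathbf{A} = \mathbf{D}(l+1,l+1)$, so the recurrence survives at both boundaries. I expect the only genuine difficulty to be organizational rather than conceptual: one must keep the two index shifts aligned and check that the endpoint convention on binomials with negative lower entry is consistent with the differing ranges of the diagonal and shifted families. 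Once the order-raising property of $\partial_x$ is recorded, the entire induction rests on Pascal's rule, and no new estimate or idea is needed.
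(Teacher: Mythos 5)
Your proof is correct and follows essentially the same route as the paper's: induction on $l$, splitting $\partial_x$ applied to the inductive hypothesis into a coefficient-differentiated family and an index-shifted family, then reducing to Pascal's rule $\binom{l-1}{l-k}+\binom{l-1}{l+1-k}=\binom{l}{l+1-k}$ (and its analogue for the $\mathbf{A}_x$ part) together with the endpoint conventions $\binom{m}{-1}=0$ and $\binom{m}{0}=\binom{m}{m}=1$. Your only departures are organizational improvements — packaging the inductive step as the single matrix recurrence $\mathbf{D}(l+2,k)=\partial_x\mathbf{D}(l+1,k)+\mathbf{D}(l+1,k-1)$ and actually verifying the base case $l=1$ from the simplified relation (\ref{eq:simplify-1}) rather than citing it as known — both of which are consistent with, and slightly cleaner than, the paper's expanded computation.
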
 
 
\begin{proof}
Let us prove it by induction.

\begin{itemize}
\item We already know that this is true for $ l = 1$. 

\item Let us assume it is true for $l$, that is
\begin{eqnarray}
\label{eq:hip-expansion-dxkdt}
\begin{array}{c}

\partial_x^{ ( l)} (\partial_t \mathbf{Q}) = 
\displaystyle 
\sum_{k=1 }^{ l+1} \biggl(

\left(
\begin{array}{c}
l -1 \\
l - k 
\end{array}
\right)

\mathbf{B}_{x}^{(l-k)}

-

\left(
\begin{array}{c}
l  \\
l + 1 - k 
\end{array}
\right)
\mathbf{A}_{x}^{(l+1-k)}

\biggr)
\partial_x^{(k)} \mathbf{Q} \;.
\end{array}
\end{eqnarray}

\item Let us prove it is true for $l+1$. Indeed

\begin{eqnarray}
\label{eq:expansion-dxkdt-0}
\begin{array}{ll}

\partial_x^{ ( l+1)} (\partial_t \mathbf{Q}) 
&
=   
 
\displaystyle 
\sum_{k=1 }^{ l+1} 

\biggl(

\left(
\begin{array}{c}
l -1 \\
l - k 
\end{array}
\right)

\mathbf{B}_{x}^{(l+1-k)}

-

\left(
\begin{array}{c}
l  \\
l + 1 - k 
\end{array}
\right)
\mathbf{A}_{x}^{(l+2-k)}

\biggr)
\partial_x^{(k)} \mathbf{Q}

\\

&
\displaystyle 
+ 
\sum_{k=1 }^{ l+1} 
\biggl(

\left(
\begin{array}{c}
l - 1 \\
l - k 
\end{array}
\right)
\mathbf{B}_{x}^{(l+1-k)}

-

\left(
\begin{array}{c}
l  \\
l + 1 - k 
\end{array}
\right)
\mathbf{A}_{x}^{(l+1-k)}

\biggr)
\partial_x^{(k+1)} \mathbf{Q} 

\\
&
=

\biggl(

\left(
\begin{array}{c}
l -1 \\
l - 1 
\end{array}
\right)

\mathbf{B}_{x}^{(l+1)}

-

\left(
\begin{array}{c}
l  \\
l  
\end{array}
\right)
\mathbf{A}_{x}^{(l+2)}

\biggr)
\partial_x \mathbf{Q} 

\\
& + 

\displaystyle 
\sum_{k=2 }^{ l+1} 

\bigg[

\left(
\begin{array}{c}
l -1 \\
l + 1 - k 
\end{array}
\right)
+
\left(
\begin{array}{c}
l -1 \\
l - k 
\end{array}
\right)

\biggr]
\mathbf{B}_{x}^{(l+1-k)}

\partial_x^{(k)} \mathbf{Q} 

\\
&
 -
\displaystyle 
\sum_{k=2 }^{ l+1} 
\biggl[
\left(
\begin{array}{c}
l  \\
l + 2 - k 
\end{array}
\right)
+
\left(
\begin{array}{c}
l  \\
l + 1 - k 
\end{array}
\right)

\biggr]
\mathbf{A}_{x}^{(l+2-k)}

\partial_x^{(k)} \mathbf{Q} 

\\
& +

\displaystyle 
\biggl(

\left(
\begin{array}{c}
l -1 \\
- 1 
\end{array}
\right)

\mathbf{B}_{x}^{(-1)}

-

\left(
\begin{array}{c}
l  \\
0  
\end{array}
\right)
\mathbf{A}_{x}^{(0)}

\biggr)
\partial_x^{(l+2)} \mathbf{Q} 
\;.
\end{array}
\end{eqnarray}
By considering the properties of the combinatorial factors
\begin{eqnarray}
\begin{array}{c}

\left(
\begin{array}{c}
l -1 \\
l + 1 - k 
\end{array}
\right)
+
\left(
\begin{array}{c}
l -1 \\
l - k 
\end{array}
\right)

= 

\left(
\begin{array}{c}
l \\
l + 1 - k 
\end{array}
\right)
\;,

\\
\\
\left(
\begin{array}{c}
l-1 \\
l-1 
\end{array}
\right)
=
\left(
\begin{array}{c}
l \\
l 
\end{array}
\right)
=
\left(
\begin{array}{c}
l \\
0 
\end{array}
\right)
=
\left(
\begin{array}{c}
l-1 \\
0 
\end{array}
\right)
= 1
\end{array}
\end{eqnarray}
and the assumption
\begin{eqnarray}
\begin{array}{c}
\left(
\begin{array}{c}
m \\
-1 
\end{array}
\right)
=0 \;,
\end{array}
\end{eqnarray}
for all $m$, after grouping terms we obtain 
\begin{eqnarray}
\label{eq:expansion-dxkdt-1}
\begin{array}{c}

\partial_x^{ ( l+1)} (\partial_t \mathbf{Q}) = 
\displaystyle 
\sum_{k=1 }^{ l+2} \biggl(

\left(
\begin{array}{c}
l \\
l + 1 - k 
\end{array}
\right)

\mathbf{B}_{x}^{(l + 1 - k)}

-

\left(
\begin{array}{c}
l + 1 \\
l + 2 - k 
\end{array}
\right)
\mathbf{A}_{x}^{(l+2-k)}

\biggr)
\partial_x^{(k)} \mathbf{Q} \;.
\end{array}
\end{eqnarray}
This completes the proof.
\end{itemize}

\end{proof}

\begin{proposition}\label{proposition:main-1}

The high-order time derivatives have the following recursive form
\begin{eqnarray}
\label{eq:ReduceCK-formula}
\begin{array}{c}
\displaystyle

\partial_t^{(k)} \mathbf{Q} = \sum_{l=1}^{k} \mathbf{C}(k, l) \partial_x^{(l)} \mathbf{Q} + \partial_t^{(k-2)}( \mathbf{B}\partial_t  \mathbf{Q})\;,

\end{array}
\end{eqnarray}
where 
\begin{eqnarray}
\label{eq:Matrix-C}
\begin{array}{c}
 \mathbf{C}(k, l ) =
 \left\{
 \begin{array}{cc}
 \mathbf{C}(k-1,k-1) \mathbf{D}(k, k), & l = k \;, \\
 \mathbf{C}(k-1,l)_t + \sum_{m=l-1}^{k-1} \mathbf{C}(k-1,m) \mathbf{D}(m+1,l), & l < k \;, \\
 
 \end{array}
 \right. 
\end{array}
\end{eqnarray}
here, the matrix $\mathbf{D}$ is given by (\ref{eq:Matrix-D}). We impose $ \mathbf{C}(k,0) = \mathbf{0}$ $\forall k >0 $, $\mathbf{C}(1,1) = - \mathbf{A}$ and $
\partial_t^{(-1)}( \mathbf{B}\partial_t  \mathbf{Q}) = \mathbf{S}(\mathbf{Q})$.

\end{proposition}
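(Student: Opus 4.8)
The plan is to prove the recursion by induction on $k$, with the Lemma serving as the engine that converts the mixed derivatives $\partial_t\partial_x^{(l)}\mathbf{Q}$ produced by a time differentiation back into pure spatial derivatives. For the base case $k=1$ the claimed identity reads $\partial_t\mathbf{Q} = \mathbf{C}(1,1)\partial_x\mathbf{Q} + \partial_t^{(-1)}(\mathbf{B}\partial_t\mathbf{Q})$, which under the stipulated conventions $\mathbf{C}(1,1)=-\mathbf{A}$ and $\partial_t^{(-1)}(\mathbf{B}\partial_t\mathbf{Q})=\mathbf{S}(\mathbf{Q})$ collapses exactly to the first simplification (\ref{eq:simplify-1}). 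I would also record the case $k=2$ as a consistency check, since the recursion must reproduce $\mathbf{C}(2,2)=\mathbf{A}^2$ and $\mathbf{C}(2,1)=-\mathbf{A}_t-\mathbf{A}(\mathbf{B}-\mathbf{A}_x^{(1)})$ already obtained by hand just before the statement.

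For the inductive step, assume the formula holds at order $k-1$ and apply $\partial_t$ to both sides. The inhomogeneous term differentiates cleanly, $\partial_t\bigl[\partial_t^{(k-3)}(\mathbf{B}\partial_t\mathbf{Q})\bigr]=\partial_t^{(k-2)}(\mathbf{B}\partial_t\mathbf{Q})$, yielding precisely the source-carrying term required at order $k$. Here the convention $\partial_t^{(-1)}(\mathbf{B}\partial_t\mathbf{Q})=\mathbf{S}(\mathbf{Q})$ is exactly what makes the first transition $k=1\to k=2$ consistent, because by the chain rule $\partial_t\mathbf{S}(\mathbf{Q})=\mathbf{B}\partial_t\mathbf{Q}$ with $\mathbf{B}$ the Jacobian of $\mathbf{S}$. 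It then remains only to show that $\partial_t\bigl[\sum_{l=1}^{k-1}\mathbf{C}(k-1,l)\partial_x^{(l)}\mathbf{Q}\bigr]$ equals $\sum_{l=1}^{k}\mathbf{C}(k,l)\partial_x^{(l)}\mathbf{Q}$.

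Applying the product rule splits this quantity into $\sum_{l=1}^{k-1}\mathbf{C}(k-1,l)_t\,\partial_x^{(l)}\mathbf{Q}$, which supplies the $\mathbf{C}(k-1,l)_t$ contribution in (\ref{eq:Matrix-C}), and $\sum_{l=1}^{k-1}\mathbf{C}(k-1,l)\,\partial_t\partial_x^{(l)}\mathbf{Q}$. Interchanging the time and space derivatives, which is legitimate under the regularity assumed in Step II, and inserting the Lemma (\ref{eq:expansion-dxkdt}) turns the second piece into the double sum $\sum_{p=1}^{k-1}\sum_{j=1}^{p+1}\mathbf{C}(k-1,p)\mathbf{D}(p+1,j)\,\partial_x^{(j)}\mathbf{Q}$. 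I would then exchange the order of summation: extracting the coefficient of $\partial_x^{(l)}\mathbf{Q}$ requires $j=l$, hence $p+1\ge l$, so $p$ ranges from $l-1$ to $k-1$, giving the coefficient $\sum_{m=l-1}^{k-1}\mathbf{C}(k-1,m)\mathbf{D}(m+1,l)$ that appears in the second branch of (\ref{eq:Matrix-C}). The top index $l=k$ receives no $\mathbf{C}(k-1,l)_t$ contribution (that sum stops at $k-1$) and only the single term $p=k-1$ from the double sum, namely $\mathbf{C}(k-1,k-1)\mathbf{D}(k,k)$, which is the first branch of the definition.

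The main obstacle is the careful bookkeeping of the summation ranges under this exchange, together with the edge cases $l=1$ and $l=k$. For $l=1$ the lower limit $m=l-1=0$ must be admitted without spoiling the identity; this is handled by the convention $\mathbf{C}(k,0)=\mathbf{0}$, which annihilates the spurious $m=0$ term (for which the matrix $\mathbf{D}(1,1)$ would otherwise be ill-defined), so the sum can be written uniformly as $\sum_{m=l-1}^{k-1}$. For $l=k$ one must verify that the recursion isolates the product $\mathbf{C}(k-1,k-1)\mathbf{D}(k,k)$ alone. Once the indices are aligned, matching the coefficients of $\partial_x^{(l)}\mathbf{Q}$ term by term against (\ref{eq:Matrix-C}) closes the induction; the matrix $\mathbf{D}$ enters only through its definition (\ref{eq:Matrix-D}), so no further combinatorial identity beyond those already used to establish the Lemma is needed at this stage.
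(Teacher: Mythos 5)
Your proposal is correct and follows essentially the same route as the paper's own proof: induction on $k$, differentiating the order-$(k-1)$ identity in time, using the product rule plus the Lemma (\ref{eq:expansion-dxkdt}) to convert $\partial_x^{(l)}(\partial_t\mathbf{Q})$ back into pure spatial derivatives, swapping the summation order, and matching coefficients against (\ref{eq:Matrix-C}), with the conventions $\mathbf{C}(k,0)=\mathbf{0}$ and $\partial_t^{(-1)}(\mathbf{B}\partial_t\mathbf{Q})=\mathbf{S}(\mathbf{Q})$ handling the edge cases exactly as the paper does. The only cosmetic difference is that you anchor the induction at $k=1$ via the conventions while the paper verifies $k=2$ directly and justifies the $k=1$ reading afterwards; your treatment of the $l=1$ and $l=k$ boundary terms is careful and matches the paper's bookkeeping.
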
 

\begin{proof} Let us prove this proposition by induction. 

\begin{itemize}
\item The result is true for $k = 2$. In fact, we know that
\begin{eqnarray}
\label{eq:hip-1}
\begin{array}{c}
\partial_t \mathbf{Q} = - \mathbf{A} \partial_x \mathbf{Q} + \mathbf{S}(\mathbf{Q}) \;.
\end{array}
\end{eqnarray}
Since, $\partial_t^{(2)} \mathbf{Q} = \partial_t (\partial_t \mathbf{Q})$ from (\ref{eq:hip-1}) we obtain 
\begin{eqnarray}
\label{eq:hip-2}
\begin{array}{c}
\partial_t^{(2)} \mathbf{Q} = \mathbf{A}^{2} \partial_x^{(2)} \mathbf{Q} + (- \mathbf{A}_t  - \mathbf{A}(\mathbf{B} - \mathbf{A}_x) )  \partial_x \mathbf{Q} +  \mathbf{B} \partial_t \mathbf{Q} \;.
\end{array}
\end{eqnarray}
Here, we have used the chain rule 
\begin{eqnarray}
\begin{array}{c}
\partial_t ( \mathbf{S}(\mathbf{Q}) ) = \mathbf{B} \partial_t \mathbf{Q}\;.
\end{array}
\end{eqnarray}
Therefore, from the expressions $ \mathbf{C}(1,1) = - \mathbf{A}$, $\mathbf{D}(2,2)  = -\mathbf{A}$, $\mathbf{D}(2,1) = \mathbf{B} - \mathbf{A}_x$ and by identifying terms, the induction hypothesis is valid for $k = 2$.

\item We assume the induction hypothesis is valid for $k = n $ and thus
\begin{eqnarray}
\begin{array}{c}
\displaystyle

\partial_t^{(k)} \mathbf{Q} = \sum_{l=1}^{k} \mathbf{C}(k, l) \partial_x^{(l)} \mathbf{Q} + \partial_t^{(k-2)}( \mathbf{B}\partial_t  \mathbf{Q})\;,

\end{array}
\end{eqnarray}
for all $k\leq n$.

\item Let us prove this is valid for $k = n+1$. In fact
\begin{eqnarray}
\label{eq:DerivativeProof:eq-1}
\begin{array}{ll}
\displaystyle

\partial_t^{(n+1)} \mathbf{Q} & =  
\displaystyle
\sum_{l=1}^{n} 
\partial_t ( \mathbf{C}(n, l) \partial_x^{(l)} \mathbf{Q}   ) + \partial_t^{(n-1)}( \mathbf{B}\partial_t  \mathbf{Q})
\\
& = 
\displaystyle
\sum_{l=1}^{n} 
\mathbf{C}(n, l)_t \partial_x^{(l)} \mathbf{Q}   
+
\sum_{l=1}^{n} 
\mathbf{C}(n, l) \partial_x^{(l)} ( \partial_t \mathbf{Q}   ) 
+ \partial_t^{(n-1)}( \mathbf{B}\partial_t  \mathbf{Q})

\\
& = 
\displaystyle
\sum_{l=1}^{n} 
\mathbf{C}(n, l)_t \partial_x^{(l)} \mathbf{Q}   
+
\sum_{l=1}^{n} 
\mathbf{C}(n, l) 

\sum_{ m = 1}^{ l + 1} \mathbf{D}(l+1, m ) \partial_x^{(m)} \mathbf{Q}

+ \partial_t^{(n-1)}( \mathbf{B}\partial_t  \mathbf{Q})

\\
& = 
\displaystyle

\sum_{l=1}^{n} 
\mathbf{C}(n, l)_t \partial_x^{(l)} \mathbf{Q}   
+
\sum_{m=1}^{n} 

\mathbf{C}(n, m) 

\sum_{ l = 1}^{ m + 1}
 \mathbf{D}(m+1, l ) \partial_x^{(l)} \mathbf{Q}

+ \partial_t^{(n-1)}( \mathbf{B}\partial_t  \mathbf{Q})

\\
& = 
\displaystyle

\sum_{l=1}^{n} 
\biggl[
\mathbf{C}(n, l)_t    
+
\sum_{m=l-1}^{n} 

\mathbf{C}(n, m) 
\mathbf{D}(m+1, l )
\biggr]
 \partial_x^{(l)} \mathbf{Q}

\\
& 
\displaystyle
+
\mathbf{C}(n, n) 
\mathbf{D}(n+1, n+1 )
\biggr]
 \partial_x^{(n+1)} \mathbf{Q}

+ \partial_t^{(n-1)}( \mathbf{B}\partial_t  \mathbf{Q})

\;,

\end{array}
\end{eqnarray}
with  $\mathbf{C}(n, 0 ) = \mathbf{0}$. So by collecting terms and defining
\begin{eqnarray}
\begin{array}{c}

\mathbf{C}(n+1, l ) =
\left\{
\begin{array}{cc}

\mathbf{C}(n,n) \mathbf{D}(n+1, n+1) &, l = n+1 \;, \\

\mathbf{C}(n, l)_t    
+
\sum_{m=l-1}^{n} 

\mathbf{C}(n, m) 
\mathbf{D}(m+1, l ) &, l < n + 1 \;,

\end{array}

\right.
\end{array}
\end{eqnarray}
we can write (\ref{eq:DerivativeProof:eq-1}) as
\begin{eqnarray}
\begin{array}{c}

\partial_t^{(n+1)} \mathbf{Q} =  
\displaystyle
\sum_{l=1}^{n+1}

\mathbf{C}(n+1, l) \partial_x^{(l)} \mathbf{Q} 

+ \partial_t^{(n-1)}( \mathbf{B}\partial_t  \mathbf{Q})

\;, 

\end{array}
\end{eqnarray}
this proves the sought result.
\end{itemize} 

 Notice that the condition $\partial_t^{(-1)}( \mathbf{B}\partial_t  \mathbf{Q}) = \partial_t^{(-1)}(  \partial_t \mathbf{S}( \mathbf{Q}) ) = \partial_t^{(0)}\mathbf{S}(\mathbf{Q}) = \mathbf{S}(\mathbf{Q})$ is natural, which also justifies the expression (\ref{eq:hip-1}), it also corresponds to $k = 1$ in the formula (\ref{eq:ReduceCK-formula}).

\end{proof} 
Notice that the expression (\ref{eq:ReduceCK-formula}) is only possible from the simplifications proposed in this work. This is not possible, in general, for the conventional Cauchy-Kowalewskaya procedure. Notice that (\ref{eq:ReduceCK-formula}) expresses the time derivatives in terms of spatial derivatives of $\mathbf{Q}$, space and time derivatives of both $\mathbf{A}$ and $\mathbf{B}$.

\begin{corollary}\label{corollary-1}

The expression (\ref{eq:ReduceCK-formula}) can be written as
\begin{eqnarray}
\label{eq:ReduceCK-formula-useful}
\begin{array}{c}
\displaystyle

\partial_t^{(k)} \mathbf{Q} = \mathbf{M}_k +  \mathbf{B}\partial_t^{(k-1)}  \mathbf{Q}\;,

\end{array}
\end{eqnarray}
where
\begin{eqnarray}
\label{eq:2:ReduceCK-formula-useful}
\begin{array}{c}
\displaystyle

\mathbf{M}_k = 
\sum_{l=1}^{k} \mathbf{C}(k, l) \partial_x^{(l)} \mathbf{Q} 
 + 
\sum_{l=1}^{k-2} 
\left(
\begin{array}{c}
k-2\\
l-1
\end{array}
\right)
\mathbf{B}_t^{(k-1-l)} \partial_t^{(l)} \mathbf{Q}

\;.
\end{array}
\end{eqnarray}
\end{corollary}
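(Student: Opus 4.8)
The plan is to derive (\ref{eq:ReduceCK-formula-useful}) directly from Proposition \ref{proposition:main-1} by expanding the composite term $\partial_t^{(k-2)}(\mathbf{B}\partial_t \mathbf{Q})$ in (\ref{eq:ReduceCK-formula}) with the general Leibniz (product) rule and then isolating its highest-order temporal contribution. Throughout I work with $k \geq 2$, so that the exponent $k-2$ is a genuine nonnegative order of differentiation and the product rule applies verbatim; the excluded case $k=1$ is not covered by this rewriting but is instead fixed by the convention $\partial_t^{(-1)}(\mathbf{B}\partial_t \mathbf{Q}) = \mathbf{S}(\mathbf{Q})$ already recorded in the proposition. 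Because $\mathbf{B}$ is a matrix multiplying the vector $\partial_t\mathbf{Q}$, I would keep careful track of the left--right ordering of factors: under differentiation the derivatives of $\mathbf{B}$ remain on the left and those of $\mathbf{Q}$ on the right, which is exactly the ordering appearing in the target expression (\ref{eq:2:ReduceCK-formula-useful}).

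Concretely, I would first write
\[
\partial_t^{(k-2)}(\mathbf{B}\partial_t \mathbf{Q}) = \sum_{j=0}^{k-2} \binom{k-2}{j}\,\mathbf{B}_t^{(k-2-j)}\,\partial_t^{(j+1)}\mathbf{Q},
\]
using $\partial_t^{(j)}(\partial_t\mathbf{Q}) = \partial_t^{(j+1)}\mathbf{Q}$ and the notation $\mathbf{B}_t^{(p)}$ for the $p$-th time derivative of $\mathbf{B}$, with $\mathbf{B}_t^{(0)}=\mathbf{B}$ (the temporal analogue of the spatial convention used elsewhere in the paper). Next I would reindex by setting $l = j+1$, so that $j$ ranging over $0,\dots,k-2$ corresponds to $l$ ranging over $1,\dots,k-1$, with $\binom{k-2}{j}=\binom{k-2}{l-1}$, $\mathbf{B}_t^{(k-2-j)}=\mathbf{B}_t^{(k-1-l)}$ and $\partial_t^{(j+1)}\mathbf{Q}=\partial_t^{(l)}\mathbf{Q}$, giving
\[
\partial_t^{(k-2)}(\mathbf{B}\partial_t \mathbf{Q}) = \sum_{l=1}^{k-1} \binom{k-2}{l-1}\,\mathbf{B}_t^{(k-1-l)}\,\partial_t^{(l)}\mathbf{Q}.
\]
I would then peel off the top index $l=k-1$: since $\binom{k-2}{k-2}=1$ and $\mathbf{B}_t^{(0)}=\mathbf{B}$, that summand is precisely $\mathbf{B}\,\partial_t^{(k-1)}\mathbf{Q}$, leaving the residual sum running only over $l=1,\dots,k-2$.

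Substituting this decomposition back into (\ref{eq:ReduceCK-formula}) finishes the argument: the spatial-derivative block $\sum_{l=1}^{k}\mathbf{C}(k,l)\partial_x^{(l)}\mathbf{Q}$ together with the residual sum $\sum_{l=1}^{k-2}\binom{k-2}{l-1}\mathbf{B}_t^{(k-1-l)}\partial_t^{(l)}\mathbf{Q}$ is exactly $\mathbf{M}_k$ as defined in (\ref{eq:2:ReduceCK-formula-useful}), while the isolated term is $\mathbf{B}\partial_t^{(k-1)}\mathbf{Q}$, yielding (\ref{eq:ReduceCK-formula-useful}). I do not expect any deep obstacle here, since the content is a single application of the product rule followed by an index shift; the only care required is boundary bookkeeping. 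In particular I would verify the degenerate case $k=2$, where the residual sum is empty so that $\mathbf{M}_2$ reduces to the spatial part alone and the identity collapses to $\partial_t^{(2)}\mathbf{Q} = \mathbf{C}(2,2)\partial_x^{(2)}\mathbf{Q} + \mathbf{C}(2,1)\partial_x\mathbf{Q} + \mathbf{B}\partial_t\mathbf{Q}$, matching the explicit second-derivative formula derived earlier, and I would confirm the consistency of the $k=1$ borderline with the $\partial_t^{(-1)}$ convention.
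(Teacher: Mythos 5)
Your proposal is correct and follows the same route as the paper: the paper also expands $\partial_t^{(k-2)}(\mathbf{B}\,\partial_t\mathbf{Q})$ via the generalized Leibniz rule (stated there as Proposition \ref{prop:algebraic-manipulation} in the appendix) to get $\sum_{l=1}^{k-1}\binom{k-2}{l-1}\mathbf{B}_t^{(k-1-l)}\partial_t^{(l)}\mathbf{Q}$, and then isolates the $l=k-1$ term $\mathbf{B}\,\partial_t^{(k-1)}\mathbf{Q}$. Your explicit index shift, ordering remark, and boundary checks ($k=2$ and the $\partial_t^{(-1)}$ convention) simply spell out details the paper leaves implicit.
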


\begin{proof}
This result follows from the manipulation of (\ref{eq:ReduceCK-formula}) in Proposition \ref{proposition:main-1}.  Particularly the term $ \partial_t^{(k-2 )} (\mathbf{B} \partial_t \mathbf{Q}) $ can be expressed, by using the result in Proposition \ref{prop:algebraic-manipulation}, as 
\begin{eqnarray}
\begin{array}{c}

\partial_t^{(k-2 )} (\mathbf{B} \partial_t \mathbf{Q} ) = \sum^{k-1}_{l=1} 
\left( 
\begin{array}{c}
k-2 \\
l-1
\end{array}
\right)
\mathbf{B}_t^{(k-1-l)} \partial_t^{(l)} \mathbf{Q}\;.
\end{array}
\end{eqnarray}
By collecting terms and isolating for $l=k-1$, we obtain

\begin{eqnarray}
\begin{array}{c}
\displaystyle

\partial_t^{(k)} \mathbf{Q} = \sum_{l=1}^{k} \mathbf{C}(k, l) \partial_x^{(l)} \mathbf{Q} 
 + 
\sum_{l=1}^{k-2} 
\left(
\begin{array}{c}
k-2\\
l-1
\end{array}
\right)
\mathbf{B}_t^{(k-1-l)} \partial_t^{(l)} \mathbf{Q}

 +  \mathbf{B}\partial_t^{(k-1)}  \mathbf{Q}\;

\end{array}
\end{eqnarray}
and thus the result holds.
\end{proof}

\begin{proposition}\label{proposition:ReduceCK-formula-general}

\begin{eqnarray}
\label{eq:ReduceCK-formula-general}
\begin{array}{c}
\displaystyle

\partial_t^{(k)} \mathbf{Q} = \sum_{r=2}^{k} \mathbf{M}_r +  \mathbf{B}^{k-1}\mathbf{S} (  \mathbf{Q} )\;,

\end{array}
\end{eqnarray}
where $\mathbf{M}_r$ are those in (\ref{eq:2:ReduceCK-formula-useful}).
\end{proposition}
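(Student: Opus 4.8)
The plan is to prove the statement by induction on $k$, using the recursion established in Corollary \ref{corollary-1} as the engine. Equation (\ref{eq:ReduceCK-formula-useful}) reads $\partial_t^{(k)}\mathbf{Q} = \mathbf{M}_k + \mathbf{B}\,\partial_t^{(k-1)}\mathbf{Q}$, and it already isolates the single term $\mathbf{B}\,\partial_t^{(k-1)}\mathbf{Q}$ coupling the $k$-th time derivative to the next lower one. Since Corollary \ref{corollary-1} has already absorbed the Leibniz expansion of $\partial_t^{(k-2)}(\mathbf{B}\,\partial_t\mathbf{Q})$ into the compact coefficient $\mathbf{M}_k$, I would not need to re-open that product rule; instead the whole argument reduces to unrolling this one-term recursion down to the first time derivative and then closing it through the convention $\partial_t^{(-1)}(\mathbf{B}\,\partial_t\mathbf{Q}) = \mathbf{S}(\mathbf{Q})$ fixed in Proposition \ref{proposition:main-1}.

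For the base case I would take $k=2$, the smallest index for which the sum $\sum_{r=2}^{k}\mathbf{M}_r$ is non-empty. There (\ref{eq:ReduceCK-formula-useful}) gives $\partial_t^{(2)}\mathbf{Q} = \mathbf{M}_2 + \mathbf{B}\,\partial_t\mathbf{Q}$, and identifying the dangling lowest-order derivative with the source contribution via the stated convention yields exactly $\mathbf{M}_2 + \mathbf{B}\,\mathbf{S}(\mathbf{Q})$, which is (\ref{eq:ReduceCK-formula-general}) at $k=2$. For the inductive step I would substitute the hypothesis at order $k-1$ into the right-hand side of (\ref{eq:ReduceCK-formula-useful}); telescoping the recursion all the way down produces a chain of the form
\[
\partial_t^{(k)}\mathbf{Q} = \mathbf{M}_k + \mathbf{B}\,\mathbf{M}_{k-1} + \mathbf{B}^{2}\mathbf{M}_{k-2} + \cdots + \mathbf{B}^{k-2}\mathbf{M}_2 + \mathbf{B}^{k-1}\,\partial_t\mathbf{Q},
\]
in which each application of the recursion peels off one new $\mathbf{M}_r$ and advances the power of $\mathbf{B}$ carried by the terminal derivative, so that the last application raises the source term from $\mathbf{B}^{k-2}\mathbf{S}(\mathbf{Q})$ to $\mathbf{B}^{k-1}\mathbf{S}(\mathbf{Q})$ and reassembles the full sum ranging over $r$ from $2$ to $k$.

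The hard part will be the careful bookkeeping of the powers of $\mathbf{B}$ and the fact that $\mathbf{B}$ multiplies from the left at every step while the matrix coefficients hidden inside each $\mathbf{M}_r$ do not in general commute with it. One must therefore be explicit about the order in which these factors accumulate and verify that the terminal contribution genuinely collapses to $\mathbf{B}^{k-1}\mathbf{S}(\mathbf{Q})$ rather than to some reordered product, reconciling the accumulated left factors on each $\mathbf{M}_r$ with the compact form of (\ref{eq:ReduceCK-formula-general}). A secondary but real point of caution is applying the convention $\partial_t^{(-1)}(\mathbf{B}\,\partial_t\mathbf{Q}) = \mathbf{S}(\mathbf{Q})$ exactly once, at the bottom of the recursion and at the correct index, since this is precisely the identification that converts the leftover lowest-order time derivative into the pure source term; an off-by-one slip here is the most likely place for the induction to fail to close.
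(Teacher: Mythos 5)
Your instinct about where the danger lies is exactly right, but the step you postpone as ``the hard part'' is not a bookkeeping detail that more care would settle --- it is the point at which the induction cannot close. Unrolling the recursion of Corollary \ref{corollary-1} gives, just as you write,
\begin{equation*}
\partial_t^{(k)}\mathbf{Q}=\mathbf{M}_k+\mathbf{B}\,\mathbf{M}_{k-1}+\mathbf{B}^{2}\mathbf{M}_{k-2}+\cdots+\mathbf{B}^{k-2}\mathbf{M}_{2}+\mathbf{B}^{k-1}\partial_t\mathbf{Q}\;,
\end{equation*}
and the left factors $\mathbf{B}^{k-r}$ on the $\mathbf{M}_r$ do not go away: since the $\mathbf{M}_r$ of (\ref{eq:2:ReduceCK-formula-useful}) are generic vectors built from $\mathbf{C}(r,l)\,\partial_x^{(l)}\mathbf{Q}$ and $\mathbf{B}_t^{(r-1-l)}\partial_t^{(l)}\mathbf{Q}$, there is no commutation or regrouping under which $\sum_{r=2}^{k}\mathbf{B}^{k-r}\mathbf{M}_r$ becomes $\sum_{r=2}^{k}\mathbf{M}_r$. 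Already at $k=3$ the unrolled form is $\mathbf{M}_3+\mathbf{B}\,\mathbf{M}_2+\mathbf{B}^{2}\partial_t\mathbf{Q}$, which matches (\ref{eq:ReduceCK-formula-general}) only if $\mathbf{B}\,\mathbf{M}_2=\mathbf{M}_2$ and $\mathbf{B}^{2}\mathbf{A}\,\partial_x\mathbf{Q}=\mathbf{0}$. So your assertion that the telescoping ``reassembles the full sum ranging over $r$ from $2$ to $k$'' is precisely the step that is false as pure algebra, not merely unproved. Your base case has the same defect in miniature: the convention $\partial_t^{(-1)}(\mathbf{B}\partial_t\mathbf{Q})=\mathbf{S}(\mathbf{Q})$ fixes the meaning of the $k=1$ instance of (\ref{eq:ReduceCK-formula}), i.e.\ it encodes $\partial_t\mathbf{Q}=\mathbf{C}(1,1)\partial_x\mathbf{Q}+\mathbf{S}(\mathbf{Q})$; it is not a license to replace $\partial_t\mathbf{Q}$ by $\mathbf{S}(\mathbf{Q})$ under left multiplication by $\mathbf{B}$. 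Invoking it at $k=2$ silently discards $\mathbf{B}\,\mathbf{M}_1=-\mathbf{B}\mathbf{A}\,\partial_x\mathbf{Q}$, which is exactly the discrepancy between the corollary's $\mathbf{M}_2+\mathbf{B}\,\partial_t\mathbf{Q}$ and the proposition's $\mathbf{M}_2+\mathbf{B}\,\mathbf{S}(\mathbf{Q})$.

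For comparison: the paper's own proof is a single sentence (``this is a consequence of the corollary \ref{corollary-1}''), so your route --- induction on the one-term recursion --- is the same route the paper gestures at, and your honest unrolling in fact exposes what that one-liner glosses over. What the corollary actually yields is $\partial_t^{(k)}\mathbf{Q}=\sum_{r=1}^{k}\mathbf{B}^{k-r}\mathbf{M}_r+\mathbf{B}^{k-1}\mathbf{S}(\mathbf{Q})$, equivalently $\sum_{r=2}^{k}\mathbf{B}^{k-r}\mathbf{M}_r+\mathbf{B}^{k-1}\partial_t\mathbf{Q}$; equation (\ref{eq:ReduceCK-formula-general}) coincides with this only when the $\mathbf{B}$-powers are dropped and the $r=1$ term discarded. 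The statement should therefore be read as one more simplification in the spirit of Section \ref{sec:simplified-CK}, feeding the fixed-point iteration (\ref{eq:Nested-Picard}) --- a modelling choice, not an identity your induction (or any induction) can establish. If you want a theorem, keep the powers: the version with $\mathbf{B}^{k-r}\mathbf{M}_r$ closes under exactly the induction you set up, with base case $k=1$ given by $\partial_t\mathbf{Q}=\mathbf{M}_1+\mathbf{S}(\mathbf{Q})$ and no appeal to the $\partial_t^{(-1)}$ convention beyond its role in Proposition \ref{proposition:main-1}.
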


\begin{proof}
This is a consequence of the corollary \ref{corollary-1}.

\end{proof}

As will be seen in next sections, the previous results provide the closed form for approximations to Cauchy Kowalewskaya functionals, $\mathbf{G}^{(k)}$, which will be important to design fixed-point iteration procedures. The appendix \ref{sec:FortranCodes} shows operational details for generating the matrix $\mathbf{C}(k,l)$ involved into the simplified Cauchy-Kowaleskaya procedure introduced in this section.

\subsection{The predictor step based on a modified implicit Taylor series expansion}

Notice that the predictor $\mathbf{Q}_i$, within $I_i^n$, is required for evaluating integrals in (\ref{eq:flux-source}). On the other hand, the evaluation of these integrals is carried out by means of quadrature rules in space and time. So, for the temporal integration we use the Gaussian rule, which involves $\tau_j$, $j=1,...,n_T$ Gaussian points. Whereas, for the spatial integration, we use the Newton-Cotes rule, which involves $\xi_m$, $m=1,...,n_S$ equidistant quadrature points. See appendix \ref{sec:set-up-nodes-approx} for further details about the set up of these quadrature points through reference elements.
This set of quadrature points allows us to build space-time nodal points $(\xi_m, \tau_j)$ within the space-time cell $I_i^n$, as illustrated in the figure \ref{fig:Node-distribution}. So, from the previous comment it is evident that for flux and source evaluations, we only need the information of $\mathbf{Q}_i$ at $(\xi_m, \tau_j)$.

To obtain approximation of the predictor at every space-time node $(\xi_m, \tau_j)$, we propose the following strategy.

\begin{enumerate}
\item Provide a starting guess for $\mathbf{Q}_i(\xi_m, \tau_j)$, $m=1,...,n_S$ and $j = 1,...,n_T$.

This is done by using the formula
\begin{eqnarray}
\begin{array}{c}
\mathbf{Q}_i(\xi_m, \tau_j) = [ \mathbf{I} - \tau_j \mathbf{B}(\mathbf{W}(\xi_m))]^{-1}
(\mathbf{W}_i(\xi_m) - \tau_j \mathbf{A}(\mathbf{W}(\xi_m) )  \partial_x \mathbf{Q}_i(\xi_m, \tau_j) )\;,
\end{array}
\end{eqnarray}
which corresponds to the second order accurate expression in \cite{Toro:2015a}. Here, $\mathbf{W}_i(\xi)$ represents the reconstruction polynomial obtained within the space-time cell $I_i^n$. Any reconstruction procedure can be implemented, however, in this work we use the Weighted Essentially Non-Oscillatory (WENO) reconstruction method described in \cite{Dumbser:2008a}.

\item Compute the approximation of high-order derivatives in time and space as well of the state and Jacobian matrices. For this purpose, we use the following approach.  

Let $\mathbf{M}$ be a function, which may represent the state function $\mathbf{Q}$ and the Jacobian matrices $\mathbf{A}$ and $\mathbf{B}$ as well. 

\begin{itemize}
\item Then, for obtaining $\mathbf{M}_x^{(l)}(\xi, \tau_j)$, we first interpolate the function $\mathbf{M}$ on the nodes $(\xi_m, \tau_j)$ with $j$ fix and varying $m=1,...,n_S$. So an interpolation function $\tilde{\mathbf{M}}(\xi, \tau_j)$ is obtained. Then, we are able to provide approximations of spatial derivatives of $\mathbf{M}(\xi,\tau_j)$ for any order $l$  by using the spatial derivatives of  $\tilde{\mathbf{M}}(\xi, \tau_j)$. 

\item Similarly, to obtain $\mathbf{M}_t^{(l)}(\xi_m, \tau)$,  we first interpolate the function $\mathbf{M}$ on the nodes $(\xi_m, \tau_j)$ with $m$ fix and varying  $j=1,...,n_T$. So an interpolation function $\tilde{\mathbf{M}}(\xi_m, \tau)$ is obtained. Then, we are able to provide approximations of temporal derivatives of $\mathbf{M}(\xi_m, \tau)$ for any order $l$  by using the temporal derivatives of  $\tilde{\mathbf{M}}(\xi_m, \tau)$. 

\end{itemize}

In the appendix \ref{sec:set-up-nodes-approx}, is shown the form of these interpolation polynomials for the orders of accuracy considered in this paper.

\item Update $\mathbf{Q}_i$ at every $(\xi_m, \tau_j)$ by using
\begin{eqnarray}
\label{eq:implicit-taylor}
\begin{array}{c}
\mathbf{Q}_i(\xi_m,\tau_j ) = \mathbf{W}_i( \xi_m) - \sum_{k=1}^{M} \frac{(-\tau_j)^k}{k!} \tilde{\mathbf{G}}^{(k)}(\xi_m,\tau_j)\;,
\end{array}
\end{eqnarray}
where $\tilde{\mathbf{G}}^{(k)} = \tilde{\mathbf{G}}^{(k)} (\mathbf{Q}_i, ...,\partial_x^{(k)}\mathbf{Q}_i, \mathbf{A}_t^{(l)},..,\mathbf{A}_x^{(l)},..,\mathbf{B}_t^{(l)},...,\mathbf{B}_x^{(l)},... ) $ is given by (\ref{eq:ReduceCK-formula}). The derivatives of the state function and matrices are evaluated at $(\xi_m, \tau_j)$ and computed in the previous step.  The equation (\ref{eq:implicit-taylor}) corresponds to the implicit Taylor series expansion in \cite{Toro:2015a} with the difference that $\tilde{\mathbf{G}}^{(k)} $ is a simplification of the conventional Cauchy-Kowalewskaya functional.

For solving (\ref{eq:implicit-taylor}), we use the following nested Picard iteration procedure.
\begin{eqnarray}
\label{eq:Nested-Picard}
\begin{array}{lcl}
\displaystyle
\mathbf{Q}_i^{s+1} & = & \mathbf{W}_i( \xi_m)  -  \sum_{k=1}^{M} \frac{(-\tau_j)^k}{k!} \sum_{l=2}^{k} \mathbf{M}_l \\
\\
\displaystyle
&-& \sum_{k=1}^{M} \frac{(-\tau_j)^k}{k!} \mathbf{B}^{k-1}(\mathbf{Q}_{i}^{s}) \mathbf{S} (\mathbf{Q}_i^{s+1} )  \;,
\end{array}
\end{eqnarray}
where $s$ is an iteration index and $ \mathbf{M}_l$ comes from the proposition \ref{proposition:ReduceCK-formula-general}. We have omitted the arguments of $\mathbf{Q}_i$.

To solve it, we build an algebraic system, which has the form
\begin{eqnarray}
\label{eq:Algebraic-system}
\begin{array}{lcl}
\displaystyle
\mathcal{H}(\mathbf{Y}) & = & \mathbf{Y} - \mathbf{W}_i( \xi_m)   +  \sum_{k=1}^{M} \frac{(-\tau_j)^k}{k!} \sum_{l=2}^{k} \mathbf{M}_l \\
\\
\displaystyle
& & +  \sum_{k=1}^{M} \frac{(-\tau_j)^k}{k!} \mathbf{B}^{k-1}(\mathbf{Q}_{i}^{s}(\xi_m, \tau_j)) \mathbf{S} (\mathbf{Y} )  \;.
\end{array}
\end{eqnarray}
So, the update of $\mathbf{Q}_i$ is carried out as $\mathbf{Q}_i^{s+1} = \mathbf{Q}_i^{s} - \delta$, where $\delta$ is the solution to $\mathcal{J}(\mathbf{Q}_i^s ) \delta = \mathcal{H}(\mathbf{Q}_i^s)$, where
\begin{eqnarray}
\label{eq:jacobian-alg-syste}
\begin{array}{c}
\displaystyle
\mathcal{J}(\mathbf{Y}) =  \mathbf{I} +  \sum_{k=1}^{M} \frac{(-\tau_j)^k}{k!} \mathbf{B}^{k-1}(\mathbf{Q}_{i}^{s}(\xi_m, \tau_j)) \mathbf{B} (\mathbf{Y} )  \;,
\end{array}
\end{eqnarray}
is the Jacobian matrix of $ \mathcal{H}(\mathbf{Y}) $ with respect to $\mathbf{Y}$.
The update is carried out $M$ times, where $M+1$ corresponds to the order of accuracy.  Notice that the same algebraic equation for $\mathbf{Y}$ has to be solved for any order of accuracy. So, the size of $\mathbf{Y}$ does not depend on the accuracy.

\item Go to step 2. Finish if the global loop has been done by $M$ times.  By virtue of the efficiency we use a limited number of iterations. From experiments, not shown here, the result does not vary in terms of accuracy if a stop criterion, based on the tolerance for the relative error between subsequent approximations, is implemented.

\end{enumerate}

Once $\mathbf{Q}_i$ is computed for each cell $I_i^n$, the numerical flux and source terms can be easily evaluated. In the appendix \ref{sec:numerical-flux-source}, is shown the form in which the integrals in (\ref{eq:flux-source}) are evaluated.

This completes the description of the proposed strategy for obtaining the predictor within the computational cell $I_i^n$ by using the implicit GRP approach.

\begin{figure}
\begin{center}
\includegraphics[scale=0.7]{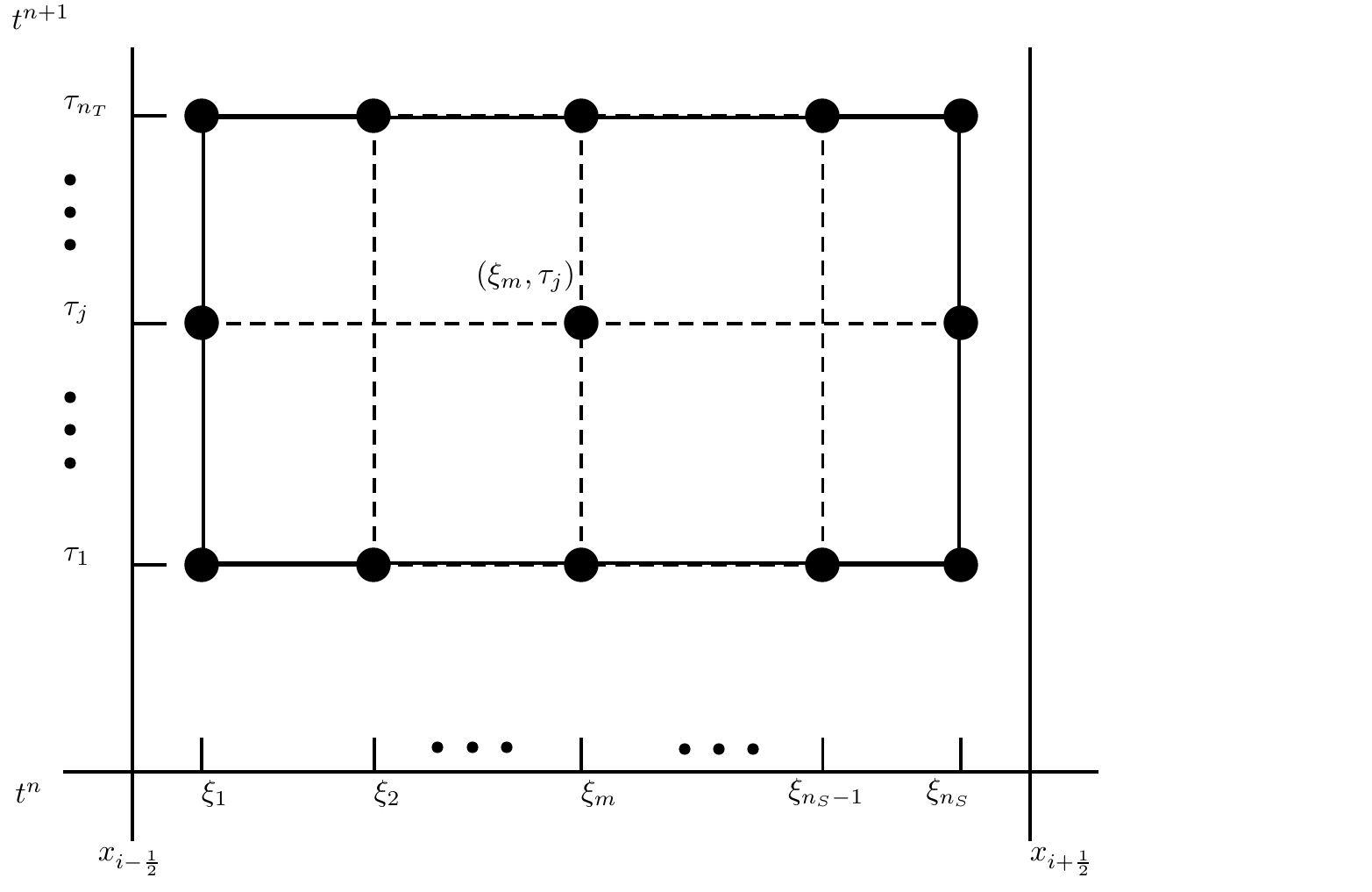}
\end{center}
\caption{Sketch of the space-time node distribution.}\label{fig:Node-distribution}
\end{figure}

\section{Numerical results}\label{section:Results}
In this section we shall consider numerical test aimed at assessing the accuracy and performance of the present scheme. The time step, $\Delta t$, will be computed by using the well-known CFL condition 
\begin{eqnarray}
\begin{array}{c}
\Delta t = C_{cfl}  \frac{ \Delta x}{ \lambda_{abs}} \;,
\end{array}
\end{eqnarray}
where $\lambda_{abs} = \max_i(  \max_j (  | \lambda_j (\mathbf{Q}_i^n) | ) ) $, here  $\lambda_j$, $j=1,...,m$ are the eigenvalues of the Jacobian matrix of $\mathbf{F}$ evaluated at $ \mathbf{Q}_i^n $, the data at each cell $I_i^n$ and the maximum is taken over all cells $I_i^n$.  On the hand, to assess the empirically the convergence rate, we are going to use the norm
\begin{eqnarray}
\label{eq:norm-p}
\begin{array}{c}

|| \mathbf{Q} - \mathbf{Q}^e||_p^p = \displaystyle \sum_{i = 1}^{N} \int_{x_{i-\frac{1}{2}}}^{x_{i+\frac{1}{2}} } |\mathbf{W}_i (x) - \mathbf{Q}^e(x, t_{End}) |^p dt \;, 

\end{array}
\end{eqnarray}
where $ \mathbf{Q}^e(x,t)$ is the exact solution, $\mathbf{W}_i(x)$ is the reconstruction polynomial within the interval $[x_{i-\frac{1}{2}}, x_{i+\frac{1}{2}} ]$ obtained at the output time of the global simulation. In this paper we are going to use (\ref{eq:norm-p}) with  $p =1$, $p=2$ and the maximum norm given by 
\begin{eqnarray}
\begin{array}{c}

|| \mathbf{Q} - \mathbf{Q}^e|| = \max_{ i} \{ \max_{ x\in [ x_{i-\frac{1}{2}},x_{i+\frac{1}{2}} ]}|\mathbf{W}_i ( x) - \mathbf{Q}^e(x, t_{End}) | \} \;.

\end{array}
\end{eqnarray}
Let us remark that, in the case of hyperbolic systems in which the solution vector $\mathbf{Q}$ contains more than one variables, we carry out the computation of errors as indicated above but for some particular component of the solution vector.
\subsection{A linear system of hyperbolic balance laws}
Here, we consider the linear system in \cite{Montecinos:2014c}, given by
\begin{eqnarray}
\begin{array}{c}
\partial_t \mathbf{Q}(x,t) + \mathbf{A} \partial_x ( \mathbf{Q} (x,t)) = \mathbf{B} \mathbf{Q} (x,t) \;, x \in [0,1]\;,\\
\mathbf{Q}(x,0) = 
\left[ 
\begin{array}{c}

\sin( 2 \pi x ) \\
\cos( 2 \pi x ) 

\end{array}
\right]
\;,
\end{array}
\end{eqnarray}
where 
\begin{eqnarray}
\begin{array}{c}

\mathbf{A} = \left[
\begin{array}{cc}
0       & \lambda \\
\lambda & 0 \\
\end{array}
\right]\;,

\mathbf{B} = \left[
\begin{array}{cc}
\beta   & 0 \\
0 & \beta \\
\end{array}
\right]
\;.
\end{array}
\end{eqnarray}

The problem is endowed with periodic boundary conditions.  
This system has the exact solution 
\begin{eqnarray}
\begin{array}{c}
\displaystyle \mathbf{Q}^{e}(x,t) = 
\frac{  e^{\beta t} }{2} 
\left[
\begin{array}{c}
\Phi(x,t ) + \Psi(x,t )  \\
\Phi(x,t ) - \Psi(x,t )  \\
    
\end{array}
\right] \;,
\end{array}
\end{eqnarray}
where
\begin{eqnarray}
\begin{array}{c}

\Phi(x,t ) = \sin( 2\pi (x - \lambda t) ) + \cos( 2\pi (x - \lambda t) )  \;, \\

\Psi(x,t) = \sin( 2\pi (x + \lambda t) ) - \cos( 2\pi (x + \lambda t) )\;.
\end{array}
\end{eqnarray}
Here we consider $\lambda =1$ and $\beta = -1$. This is a simple test aimed at evaluating the accuracy of the present scheme. As can be seen in the Table \ref{Table-LinearSystem}, the expected theoretical orders of accuracy are achieved. 
\begin{table}
\begin{center}
Theoretical order : 2 \\
\begin{tabular}{cccccccc} 
\\
\hline
\hline 
Mesh  & $L_\infty$ - err & $L_\infty$- ord  & $L_1$ - err & $L_1$ - ord & $L_2$ - err & $L_2$ - ord & CPU  \\  
\hline
     8  &  -     &$  2.45e-0 2$&  -     &$  1.33e-0 2$&  -     &$  1.74e-0 2$  &0.0064\\
    16  &  2.35  &$  4.81e-0 3$&  2.38  &$  2.55e-0 3$&  2.55  &$  2.97e-0 3$  &0.0114\\
    32  &  1.67  &$  1.52e-0 3$&  2.85  &$  3.54e-0 4$&  2.34  &$  5.84e-0 4$  &0.0287\\
    64  &  2.95  &$  1.96e-0 4$&  3.71  &$  2.71e-0 5$&  3.47  &$  5.29e-0 5$  &0.0853\\
   128  &  1.49  &$  6.96e-0 5$&  1.83  &$  7.62e-0 6$&  1.91  &$  1.41e-0 5$  &0.3395\\

 \hline
 \\
 \end{tabular}  
\\
Theoretical order : 3 \\
\begin{tabular}{cccccccc}  
\\
\hline 
Mesh  & $L_\infty$ - err & $L_\infty$- ord  & $L_1$ - err & $L_1$ - ord & $L_2$ - err & $L_2$ - ord & CPU  \\  
\hline
     8  &  -     &$  1.52e-0 2$&  -     &$  1.07e-0 2$&  -     &$  1.15e-0 2$  &0.0116\\
    16  &  2.85  &$  2.11e-0 3$&  2.97  &$  1.36e-0 3$&  2.94  &$  1.50e-0 3$  &0.0352\\
    32  &  3.01  &$  2.62e-0 4$&  3.03  &$  1.66e-0 4$&  3.02  &$  1.85e-0 4$  &0.1096\\
    64  &  3.05  &$  3.17e-0 5$&  3.04  &$  2.02e-0 5$&  3.05  &$  2.24e-0 5$  &0.4506\\
   128  &  3.01  &$  3.93e-0 6$&  3.01  &$  2.50e-0 6$&  3.01  &$  2.78e-0 6$  &2.0173\\

 \hline
 \\
 \end{tabular}  
\\
Theoretical order : 4 \\
\begin{tabular}{cccccccc}  
\\
\hline 
Mesh  & $L_\infty$ - err & $L_\infty$- ord  & $L_1$ - err & $L_1$ - ord & $L_2$ - err & $L_2$ - ord & CPU  \\  
\hline
     8  &  -     &$  7.69e-0 3$&  -     &$  4.80e-0 3$&  -     &$  5.26e-0 3$  &0.0358\\
    16  &  2.92  &$  1.02e-0 3$&  3.15  &$  5.42e-0 4$&  3.10  &$  6.14e-0 4$  &0.1280\\
    32  &  3.64  &$  8.15e-0 5$&  3.67  &$  4.26e-0 5$&  3.67  &$  4.82e-0 5$  &0.4703\\
    64  &  3.87  &$  5.56e-0 6$&  3.87  &$  2.92e-0 6$&  3.87  &$  3.30e-0 6$  &1.7537\\
   128  &  3.95  &$  3.59e-0 7$&  3.95  &$  1.89e-0 7$&  3.95  &$  2.14e-0 7$  &6.9353\\

 \hline
 \\
 \end{tabular}  
\\
Theoretical order : 5 \\
\begin{tabular}{cccccccc}  
\\
\hline 
Mesh  & $L_\infty$ - err & $L_\infty$- ord  & $L_1$ - err & $L_1$ - ord & $L_2$ - err & $L_2$ - ord & CPU  \\  
\hline

     8  &  -     &$  1.49e-0 3$&  -     &$  6.42e-0 4$&  -     &$  7.65e-0 4$  &0.1270\\
    16  &  4.85  &$  5.16e-0 5$&  4.88  &$  2.18e-0 5$&  4.89  &$  2.59e-0 5$  &0.3801 \\
    32  &  4.96  &$  1.66e-0 6$&  4.96  &$  6.99e-0 7$&  4.96  &$  8.29e-0 7$  &1.4746 \\
    64  &  4.99  &$  5.23e-0 8$&  4.99  &$  2.19e-0 8$&  4.99  &$  2.60e-0 8$  &5.7874 \\
   128  &  5.00  &$  1.64e-0 9$&  4.99  &$  6.88e-010$&  5.00  &$  8.16e-010$  &22.4536 \\

 \hline
\end{tabular} 
\end{center}
\caption{Linear system. Output time
  $t_{out} = 1$ with  $C_{cfl}= 0.9,$ $\beta = -1$, $\lambda =  1 $.}\label{Table-LinearSystem}
\end{table}

 \subsection{A system of non-linear hyperbolic balance laws}
 
Here we assess the present methods, applied to the non-linear system 
\begin{eqnarray}
  \label{eq:1:test2}
  \begin{array}{c}
  \partial_t \mathbf{Q} + \partial_x \mathbf{F}(\mathbf{Q}) = \mathbf{S}(\mathbf{Q})\;,  \\  
    \mathbf{Q} = 
    \left[ \sin(2 \pi x), \cos ( 2 \pi x) \right]^T\;,
  \end{array}
\end{eqnarray}
where $\mathbf{F}(\mathbf{Q})$ and $\mathbf{S}(\mathbf{Q})$ are given by 
\begin{eqnarray}
\label{eq:nonlinear:CIC-2}
\begin{array}{cc}
\mathbf{F}(\mathbf{Q}) = 
\left[
\begin{array}{c}
\frac{1}{9}\biggl( \frac{5}{2}u^2+v^2-uv \biggr)\\
\frac{1}{9}\biggl(  4uv-u^2+\frac{1}{2}v^2 \biggr)
\end{array}
\right]\;,
&
\mathbf{S}(\mathbf{Q}) = 
\left[
\begin{array}{c}
\beta \biggl( \frac{2u-v}{3} \biggr)^2\\
-\beta \biggl( \frac{2u-v}{3} \biggr)^2
\end{array}
\right]\;,
\end{array}
\end{eqnarray}
where $\beta \leq 0$ is a constant value, see \cite{Toro:2015a}. The exact solution is given by
\begin{eqnarray}
  \label{eq:7:test2}
  \begin{array}{ccc}
  u(x,t) &=& w_1(x,t)+w_2(x,t)\;,\\
  v(x,t) &=& 2w_1(x,t)-w_2(x,t)\;,\\  
  \end{array}
\end{eqnarray}
where $w_1$ and $w_2$ are the solutions  to 
\begin{eqnarray}
  \label{eq:6:test6}
  \begin{array}{ccc}
    \partial_t w_1 +w_1\partial_x (w_1) &=& 0\;,  \\
    \partial_t w_2 +w_2\partial_x (w_2) &=& \beta w_2^2\;,  \\
  \end{array}
\end{eqnarray}
where the initial condition for each equation is  
$$ w_1(x,0) = \frac{\sin(2\pi x)+cos(2\pi x)}{3} \;, w_2(x,0) = \frac{2sin(2 \pi x)-cos(2\pi x)}{3}\;.$$ 

Notice that system (\ref{eq:6:test6}) requires the solution of the Burgers equation with a non linear source term, in \cite{Toro:2015a} this solution is reported. Table \ref{Table-NonLinearSystem}, shows the empirical orders of accuracy and the CPU times. Comparing with CPU times of the implicit Taylor series expansion and conventional Cauchy-Koealewskaya procedure in \cite{Toro:2015a}, we observe that the present scheme depicts important improvements in the performance. An improvement of one order of magnitude compared with the strategies in \cite{Toro:2015a}, is obtained. 																													%
\begin{table}
\begin{center}
Theoretical order : 2 \\
\begin{tabular}{cccccccc} 
\\
\hline
\hline 
Mesh  & $L_\infty$ - err & $L_\infty$- ord  & $L_1$ - err & $L_1$ - ord & $L_2$ - err & $L_2$ - ord & CPU  \\  
\hline

    32  &  -     &$  1.95e-0 2$&  -     &$  5.45e-0 3$&  -     &$  7.81e-0 3$  &0.0061\\
    64  &  1.53  &$  6.75e-0 3$&  2.12  &$  1.25e-0 3$&  1.94  &$  2.04e-0 3$  &0.0201\\
   128  &  1.75  &$  2.01e-0 3$&  2.30  &$  2.54e-0 4$&  2.18  &$  4.49e-0 4$  &0.0490\\
   256  &  1.80  &$  5.79e-0 4$&  2.18  &$  5.62e-0 5$&  2.15  &$  1.01e-0 4$  &0.1703\\
   512  &  1.38  &$  2.23e-0 4$&  2.13  &$  1.29e-0 5$&  2.06  &$  2.44e-0 5$  &0.7851\\
																																												
\hline
\\
\end{tabular}  
\\
Theoretical order : 3 \\
\begin{tabular}{cccccccc}  
\\
\hline 
Mesh  & $L_\infty$ - err & $L_\infty$- ord  & $L_1$ - err & $L_1$ - ord & $L_2$ - err & $L_2$ - ord & CPU  \\  
\hline

    32  &  -     &$  2.00e-0 2$&  -     &$  2.88e-0 3$&  -     &$  5.17e-0 3$  &0.0139\\
    64  &  2.47  &$  3.63e-0 3$&  2.69  &$  4.45e-0 4$&  2.57  &$  8.67e-0 4$  &0.0668\\
   128  &  2.74  &$  5.42e-0 4$&  2.92  &$  5.90e-0 5$&  2.85  &$  1.20e-0 4$  &0.1935\\
   256  &  2.84  &$  7.59e-0 5$&  2.95  &$  7.62e-0 6$&  2.93  &$  1.57e-0 5$  &0.7559\\
   512  &  2.95  &$  9.83e-0 6$&  2.99  &$  9.62e-0 7$&  2.98  &$  2.00e-0 6$  &2.9953\\

 \hline
\\
\end{tabular}  
\\
Theoretical order : 4 \\
\begin{tabular}{cccccccc}  
\\
\hline 
Mesh  & $L_\infty$ - err & $L_\infty$- ord  & $L_1$ - err & $L_1$ - ord & $L_2$ - err & $L_2$ - ord & CPU  \\  
\hline

    32  &  -     &$  2.44e-0 2$&  -     &$  3.33e-0 3$&  -     &$  6.09e-0 3$  &0.0609\\
    64  &  3.07  &$  2.90e-0 3$&  3.53  &$  2.89e-0 4$&  3.24  &$  6.46e-0 4$  &0.2186\\
   128  &  3.88  &$  1.97e-0 4$&  4.14  &$  1.64e-0 5$&  4.00  &$  4.03e-0 5$  &0.7516\\
   256  &  4.28  &$  1.01e-0 5$&  4.40  &$  7.80e-0 7$&  4.34  &$  1.99e-0 6$  &3.0089\\
   512  &  4.18  &$  5.58e-0 7$&  4.33  &$  3.87e-0 8$&  4.33  &$  9.89e-0 8$  &11.5548\\

 \hline
\\
\end{tabular}  
\\
Theoretical order : 5 \\
\begin{tabular}{cccccccc}  
\\
\hline 
Mesh  & $L_\infty$ - err & $L_\infty$- ord  & $L_1$ - err & $L_1$ - ord & $L_2$ - err & $L_2$ - ord & CPU  \\  
\hline

    32  &  -     &$  8.80e-0 3$&  -     &$  8.56e-0 4$&  -     &$  1.82e-0 3$  &0.1879\\
    64  &  3.48  &$  7.91e-0 4$&  4.07  &$  5.09e-0 5$&  3.82  &$  1.29e-0 4$  &0.6927\\
   128  &  4.46  &$  3.60e-0 5$&  4.66  &$  2.02e-0 6$&  4.58  &$  5.37e-0 6$  &2.4518\\
   256  &  4.81  &$  1.29e-0 6$&  4.72  &$  7.65e-0 8$&  4.82  &$  1.90e-0 7$  &9.6558\\
   512  &  4.46  &$  5.87e-0 8$&  3.97  &$  4.86e-0 9$&  4.41  &$  8.90e-0 9$  &38.51853\\

 \hline
\end{tabular} 
\end{center}
\caption{Non-linear system. Output time
  $t_{out} = 0.1$ with  $C_{cfl}= 0.9,$ $\beta = -1$.}\label{Table-NonLinearSystem}
\end{table}

\subsection{The LeVeque and Yee test}

Here, we apply our schemes to the well-known and challenging scalar test
problem proposed by LeVeque and Yee \cite{LeVeque:1990a}, given by
\begin{eqnarray}
\begin{array}{c}
\partial_t q(x,t) + \partial_x q(x,t) = \beta q(x,t)(q(x,t) - 1)(q(x,t) - \frac{1}{2} ) \;.
\end{array}
\end{eqnarray}
We solve this PDE on the computational domain $[0,1]$ with transmissive boundary conditions and the initial condition given by
\begin{eqnarray}
\begin{array}{c}
q(x,0) = \left\{

\begin{array}{cc}
1 \;, x < 0.3 \;, \\
0 \;, x > 0.3 \;. \\
\end{array}
\right.
\end{array}
\end{eqnarray}
The solution on the characteristic curves satisfies de ordinary differential equation  $ \frac{d(x(t), t)}{dt} = \beta q(x(t),t)(q(x(t),t) - 1)(q(x(t),t) - \frac{1}{2} )  $, which has two stable solutions $q \equiv 0$ and $q \equiv 1$ and one unstable solution in $q \equiv \frac{1}{2}$ where any solution trays to away from this. Similarly, any solution associated to characteristic curves necessarily must converge to one of the two stable solutions. On the other hand, a numerical scheme which is not able to solve stiff source terms, may introduce an excessive  numerical diffusion and so the numerical solution, following characteristic curves, converges to the wrong stable solution. This penalizes the right propagation. Figure \ref{fig:LevequeAndYee} shows the comparison between the exact solution and the numerical approximations provided by the present scheme of second, third, fourth and fifth orders of accuracy. The figure shows a good agreement for $\beta = - 10000$ at $t_{out} = 0.3$, which correspond to the stiff regime. We have used $ C_{cfl}= 0.2$ and $300$ cells. This test illustrates the ability of the present scheme for solving hyperbolic balance laws with stiff source terms.
\begin{figure}
\begin{center}
\includegraphics[scale=0.5]{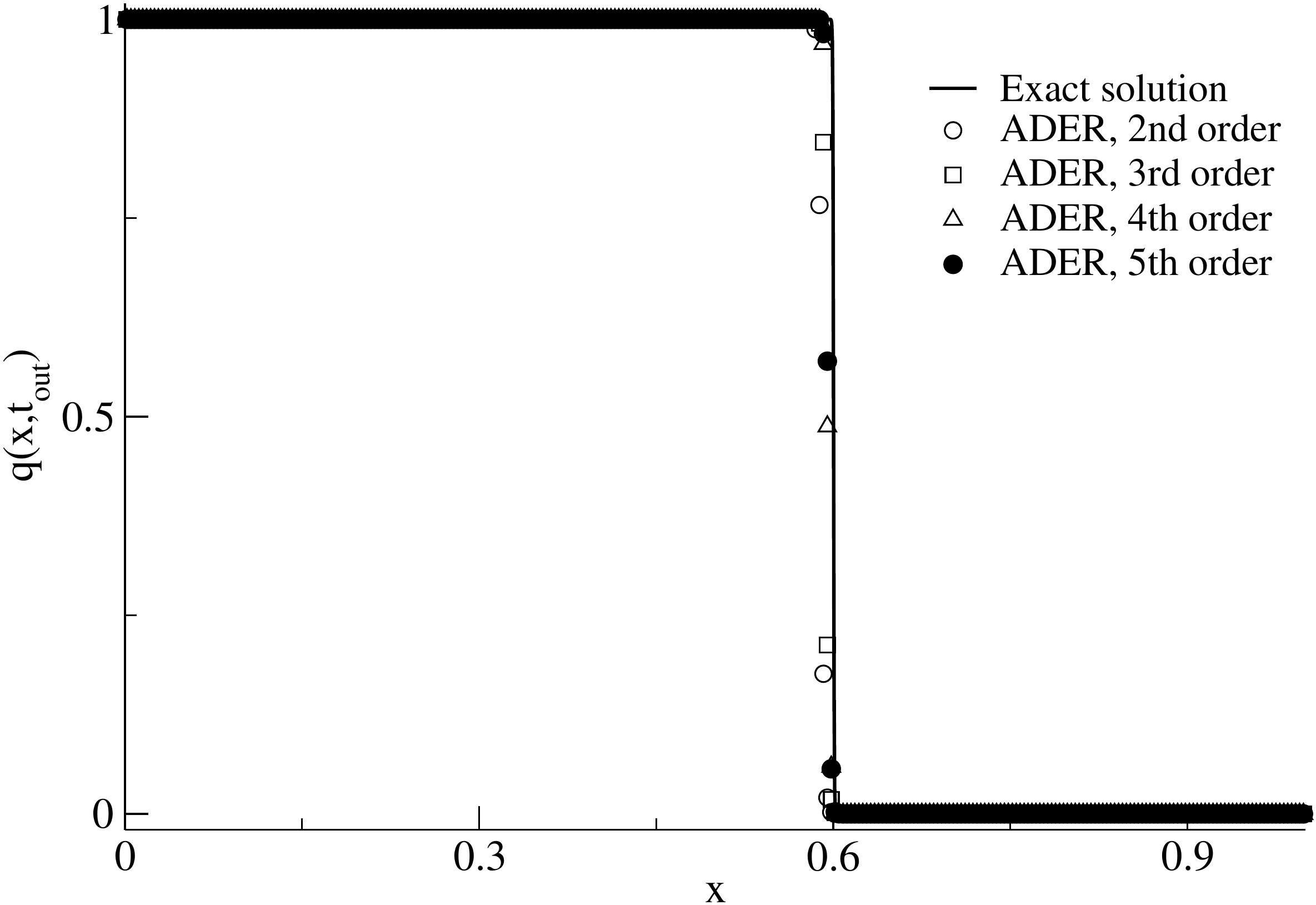}
\end{center}
\caption{Leveque and Yee test. We have used $300$ cells, $C_{cfl}=0.2$, $t_{out} = 0.3$, $\beta  = -1000$.}\label{fig:LevequeAndYee}
\end{figure}
%

\subsection{The Euler equations}
Now let us consider the Euler equations, given by
\begin{eqnarray}
\label{eq:1-euler}
\begin{array}{ccc}
\mathbf{Q} =
\left[
\begin{array}{c}
\rho \\
\rho u \\
E
\end{array}
\right]\;,
&
\mathbf{F}(\mathbf{Q}) =
\left[
\begin{array}{c}
\rho u \\
\rho u^2 + p \\
u(E+p)
\end{array}
\right]\;,
\end{array}
\end{eqnarray}
where the pressure $p$ is related with the conserved variables through the equation 
\begin{eqnarray}
\label{eq:2-euler}
p = (\gamma -1) (E -\frac{\rho u^2}{2}) \;,
\end{eqnarray}
for an ideal gas $\gamma = 1.4$.  Notice that, the choice of the initial condition given by the functions 
\begin{eqnarray}
\begin{array}{lcl}
\rho(x,0) &=& 1+0.2\sin(2\pi x)\;,\\
   u(x,0) &=& 1,\\
   p(x,0) &=& 2,
\end{array}
\end{eqnarray}
provides the exact solution for the system (\ref{eq:1-euler}), which corresponds to the set of functions 
\begin{eqnarray}
\begin{array}{ccl}
\rho(x,t) &=& 1+0.2\sin(2\pi (x-t))\;,\\
u(x,t)    &=& 1\;,\\
p(x,t)    &=& 2\;.
\end{array}
\end{eqnarray}
Notice that, the variables $\rho, u ,p$ correspond to the non-conservative variables, the corresponding translation to conserved variables needs to be done.  This test has a complex eigenstructure, which is a challenge for numerical methods. 
Table \ref{Table-Euler}, shows the results of the empirical convergence rate assessment for the density variable $\rho$, at $t_{out} = 1$ and $C_{cfl}= 0.9$, we observe that the scheme achieves the expected theoretical orders of accuracy.
\begin{table}
\begin{center}
Theoretical order : 2 \\
\begin{tabular}{cccccccc} 
\\
\hline
\hline 
Mesh  & $L_\infty$ - err & $L_\infty$- ord  & $L_1$ - err & $L_1$ - ord & $L_2$ - err & $L_2$ - ord & CPU  \\  
\hline

     8  &  -     &$  1.51e-0 1$&  -     &$  1.05e-0 1$&  -     &$  1.14e-0 1$  &0.0119\\
    16  &  1.22  &$  6.45e-0 2$&  1.76  &$  3.09e-0 2$&  1.59  &$  3.79e-0 2$  &0.0401\\
    32  &  1.43  &$  2.40e-0 2$&  1.62  &$  1.01e-0 2$&  1.65  &$  1.21e-0 2$  &0.1569\\
    64  &  1.49  &$  8.50e-0 3$&  1.90  &$  2.71e-0 3$&  1.79  &$  3.49e-0 3$  &0.4242\\
   128  &  1.53  &$  2.95e-0 3$&  2.07  &$  6.46e-0 4$&  1.85  &$  9.69e-0 4$  &1.6852\\

 \hline
 \\
 \end{tabular}  
\\
Theoretical order : 3 \\
\begin{tabular}{cccccccc}  
\\
\hline 
Mesh  & $L_\infty$ - err & $L_\infty$- ord  & $L_1$ - err & $L_1$ - ord & $L_2$ - err & $L_2$ - ord & CPU  \\  
\hline

     8  &  -     &$  8.33e-0 2$&  -     &$  5.18e-0 2$&  -     &$  5.83e-0 2$  &0.0356\\
    16  &  2.55  &$  1.43e-0 2$&  2.58  &$  8.68e-0 3$&  2.59  &$  9.68e-0 3$  &0.1292\\
    32  &  2.90  &$  1.91e-0 3$&  2.93  &$  1.14e-0 3$&  2.93  &$  1.27e-0 3$  &0.5785\\
    64  &  2.98  &$  2.42e-0 4$&  2.99  &$  1.44e-0 4$&  2.99  &$  1.60e-0 4$  &2.0123\\
   128  &  3.00  &$  3.03e-0 5$&  3.00  &$  1.80e-0 5$&  3.00  &$  2.00e-0 5$  &7.3028\\

 \hline
 \\
 \end{tabular}  
\\
Theoretical order : 4 \\
\begin{tabular}{cccccccc}  
\\
\hline 
Mesh  & $L_\infty$ - err & $L_\infty$- ord  & $L_1$ - err & $L_1$ - ord & $L_2$ - err & $L_2$ - ord & CPU  \\  
\hline

     8  &  -     &$  7.49e-0 2$&  -     &$  4.75e-0 2$&  -     &$  5.30e-0 2$  &0.1776\\
    16  &  4.17  &$  4.17e-0 3$&  4.24  &$  2.51e-0 3$&  4.25  &$  2.79e-0 3$  &0.5581\\
    32  &  4.40  &$  1.97e-0 4$&  4.49  &$  1.12e-0 4$&  4.49  &$  1.25e-0 4$  &2.0833\\
    64  &  4.27  &$  1.02e-0 5$&  4.24  &$  5.91e-0 6$&  4.24  &$  6.59e-0 6$  &7.4370\\
   128  &  4.12  &$  5.88e-0 7$&  4.08  &$  3.50e-0 7$&  4.08  &$  3.90e-0 7$  &30.9521\\

\hline
 \\
 \end{tabular}  
\\
Theoretical order : 5 \\
\begin{tabular}{cccccccc}  
\\
\hline 
Mesh  & $L_\infty$ - err & $L_\infty$- ord  & $L_1$ - err & $L_1$ - ord & $L_2$ - err & $L_2$ - ord & CPU  \\  
\hline

     8  &  -     &$  1.24e-0 2$&  -     &$  7.62e-0 3$&  -     &$  8.47e-0 3$  &0.3804\\
    16  &  4.75  &$  4.59e-0 4$&  4.80  &$  2.73e-0 4$&  4.80  &$  3.04e-0 4$  &1.6927\\
    32  &  4.95  &$  1.48e-0 5$&  4.96  &$  8.76e-0 6$&  4.96  &$  9.76e-0 6$  &6.4005\\
    64  &  4.99  &$  4.66e-0 7$&  4.99  &$  2.75e-0 7$&  4.99  &$  3.07e-0 7$  &23.9378\\
   128  &  5.00  &$  1.46e-0 8$&  5.00  &$  8.62e-0 9$&  5.00  &$  9.59e-0 9$  &88.51398\\

\hline

\end{tabular} 
\end{center}
\caption{Euler equations.Output time
  $t_{out} = 1$ with  $C_{cfl}= 0.9.$}\label{Table-Euler}
\end{table}

\subsection{  The Shu and Osher test}

Here, we consider the test problem, first time proposed by Shu and Osher in \cite{Shu:1988a}, which is given by (\ref{eq:1-euler}) and the initial condition given, in terms of non-conserved variables $\mathbf{W} = [\rho, u, p]^T$, as
\begin{eqnarray}
\begin{array}{c}
\mathbf{W}(x,0) =
\left\{  
\begin{array}{cc}
(3.8571, 2.6294, 10.333)     &, x < -0.8 \;,\\
(1 + \sin(5 \pi x), 0 , 1)   &, x \geq  -0.8 \;.
\end{array}
\right.
\end{array}
\end{eqnarray}
The problem is solved on $[-1,1]$ up to the output time $t_{out}$,  see \cite{Goetz:2016a} for further details.  Figure \ref{fig:Shu-osher} shows a comparison between a reference solution and numerical approximations. The reference solution has been obtained with the scheme of third order using $2000$ cells. The numerical results correspond to second and third orders of accuracy for which we have used $300$ cells, $C_{cfl}= 0.5$ and $t_{out}= 0.47$. This test illustrates the ability of the present scheme for solving complex fluids, a good agreement is observed for the scheme of second and third order of accuracy on the smooth region, whereas, the third order scheme present a better performance in both the smooth region and the high frequency region as well.

\begin{figure}
\begin{center}
\includegraphics[scale=0.5]{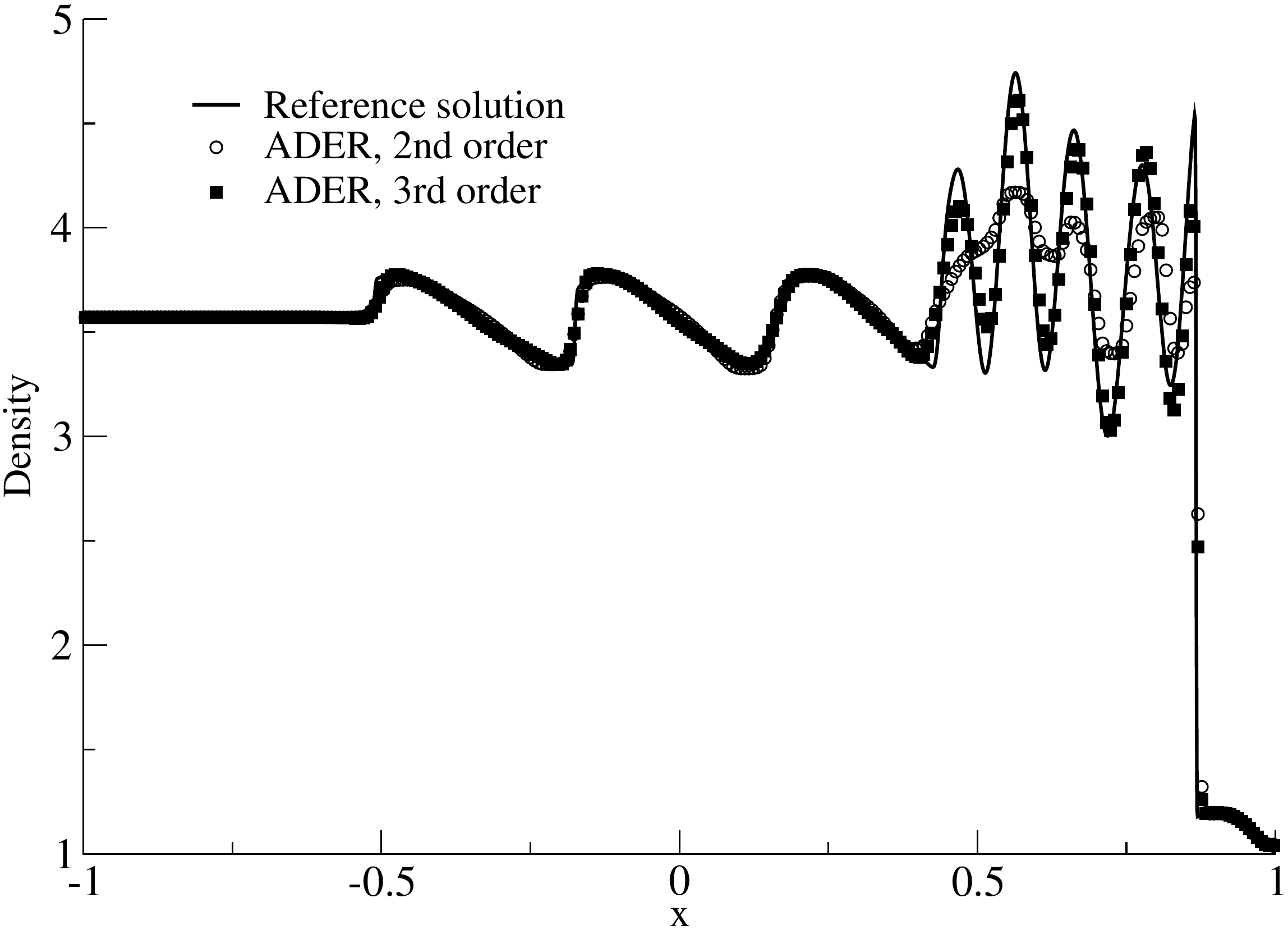}
\end{center}
\caption{The Shu-Osher test. Otput time $t_{out} = 0.47$, $300$ cells and $CFL = 0.5$.}\label{fig:Shu-osher}
\end{figure}

\section{Conclusions}\label{section:Conclusions}
In this paper, a simplified Cauchy-Kowalewkaya procedure has been proposed. The strategy uses not only the spatial derivatives of the data but also de derivatives of the Jacobian matrices in both space and time. The simplification allows us to propose a recursive formula which requires the ability of obtaining time and space derivatives of the data as well as matrices. This is achieved by using interpolations within a suitable arrangement of nodal points, which allows us to extract the information for flux and source term evaluations in a straightforward manner. This method is implemented in the context of GRP's solvers based on implicit Taylor series expansions. The solver in \cite{Toro:2015a} uses the same elements, that is, implicit Taylor series expansions and Cauchy-Kowaleskaya procedure. The Taylor expansion is used to obtain the data and the evolution of spatial derivatives required for the scheme. However, in the present approach the Taylor series expansion is used only once and the evolution of the space derivatives is not required. Despite, in both approaches, that is in \cite{Toro:2015a} and the present one, the solution of an algebraic equation is required, that in \cite{Toro:2015a} increases the number of unknowns as the accuracy increases, whereas, the number of unknowns in the present approach remains constant when the accuracy increases. We have implemented the GRP solver in the context of ADER methods and several tests reported in the literature have been solved. An empirical convergence rate assessment has been done for some of them. We have observed that the performance of the present scheme is at least one order of magnitude cheaper than schemes in \cite{Toro:2015a}. Furthermore, the expected theoretical orders of accuracy have been achieved up to fifth order of accuracy. The extension to hyperbolic systems in 2D and 3D is the subject of ongoing research.

\section*{Acknowledgements}

G.M thanks to the {\it National chilean Fund for Scientific and Technological Development}, FONDECYT, in the frame of the research project
 for Initiation in Research, number 11180926.

\bibliographystyle{plain}
\bibliography{ref} 

\appendix 

\section{Matrix-vector multiplication}\label{sec:matrix-manipulation}

In this appendix, we provide the algebraic details required for the main results in section \ref{sec:simplified-CK}.

\begin{proposition}\label{prop:algebraic-manipulation}
\begin{eqnarray}
\begin{array}{c}
\partial_t^{(l)} ( \mathbf{A} \cdot \mathbf{B}) = \sum_{k = 0}^{l}
\left(
\begin{array}{c}
l \\ 
k 
\end{array}
\right)
\mathbf{A}_t^{(l-k)} \cdot \mathbf{B}_t^{(k)}\;,
\end{array}
\end{eqnarray}
with the convention $\mathbf{A}_t^{(0)} = \mathbf{A}$,  here $ \mathbf{A}\cdot \mathbf{B}$ can be any matrix-matrix or matrix-vector multiplication. 
\end{proposition}
 
\begin{proof}
 
The proof will be carried out by mathematical induction.  Let us simplify the notation by considering matrices and vectors as elements which only depend on the variable $t$. Furthermore, we are going to assume that both $\mathbf{A}$ and $\mathbf{B}$ are regular enough in the variable $t$. 

\begin{itemize}
\item Let us verify for $l = 1$. In fact, it is asy to verify that
\begin{eqnarray}
\begin{array}{lcl}

\partial_t( \mathbf{A} \cdot \mathbf{B}) 
&=& 
\lim_{h\rightarrow 0} \frac{ \mathbf{A}(t+h) \cdot \mathbf{B}(t+h) - \mathbf{A}(t) \cdot \mathbf{B}(t)}{h}  \\
& = &  
\lim_{h\rightarrow 0} \frac{ \mathbf{A}(t+h) \cdot \mathbf{B}(t+h) - \mathbf{A}(t) \cdot \mathbf{B}(t+h) + \mathbf{A}(t) \cdot \mathbf{B}(t+h) - \mathbf{A}(t) \cdot \mathbf{B}(t)}{h}
 \\
& = &  \lim_{h\rightarrow 0} \frac{ ( \mathbf{A}(t+h)  - \mathbf{A}(t) ) \cdot \mathbf{B}(t+h)  }{h} 
     + \lim_{h\rightarrow 0} \frac{ \mathbf{A}(t) \cdot ( \mathbf{B}(t+h) - \mathbf{B}(t) ) }{h}
     
\\
& = &  \partial_t \mathbf{A} \cdot \mathbf{B} + \mathbf{A} \cdot \partial_t \mathbf{B} \;.  
     
\end{array}
\end{eqnarray}

\item Let us assume the formula is valid up to $l=n$, that is
\begin{eqnarray}
\begin{array}{c}
\partial_t^{(n)} ( \mathbf{A} \cdot \mathbf{B}) = \sum_{k = 0}^{n}
\left(
\begin{array}{c}
n \\ 
k 
\end{array}
\right)
\mathbf{A}_t^{(n-k)} \cdot \mathbf{B}_t^{(k)}
\end{array}
\end{eqnarray}
and we are going to prove, in the next step, that this is valid also for $l = n+1$.

\item Since $\mathbf{A}$ and $\mathbf{B}$ are regular enough, we have

\begin{eqnarray}
\begin{array}{c}
\partial_t^{(n+1)} ( \mathbf{A} \cdot \mathbf{B}) = \sum_{k = 0}^{n}
\left(
\begin{array}{c}
n \\ 
k 
\end{array}
\right)
\partial_t ( \mathbf{A}_t^{(n-k)} \cdot \mathbf{B}_t^{(k)} ) \\
= 
\sum_{k = 0}^{n}
\left(
\begin{array}{c}
n \\ 
k 
\end{array}
\right)
 ( \mathbf{A}_t^{(n+1-k)} \cdot \mathbf{B}_t^{(k)} + \mathbf{A}_t^{(n-k)} \cdot \mathbf{B}_t^{(k+1)} ) \;,

\end{array}
\end{eqnarray}
by expanding this expression we have
\begin{eqnarray}
\begin{array}{c}
\partial_t^{(n+1)} ( \mathbf{A} \cdot \mathbf{B}) = 

\left(
\begin{array}{c}
n \\ 
0 
\end{array}
\right) \mathbf{A}_t^{(n+1)} \cdot \mathbf{B}_t^{(0)} 

+ \left(
\left(
\begin{array}{c}
n \\ 
1 
\end{array}
\right)
+
\left(
\begin{array}{c}
n \\ 
0 
\end{array}
\right)
\right)
\mathbf{A}_t^{(n)} \cdot \mathbf{B}_t^{(1)} + \hdots
\\

+ \left(
\left(
\begin{array}{c}
n \\ 
k 
\end{array}
\right)
+
\left(
\begin{array}{c}
n \\ 
k-1 
\end{array}
\right)
\right)
\mathbf{A}_t^{(n+1-k)} \cdot \mathbf{B}_t^{(k)} + \hdots 

+
\left(
\begin{array}{c}
n \\ 
n 
\end{array}
\right)

\mathbf{A}_t^{(0)} \cdot \mathbf{B}_t^{(n+1)} \;.

\end{array}
\end{eqnarray}
By collecting terms, and using elemental combinatorial algebra we have 

\begin{eqnarray}
\begin{array}{c}
\partial_t^{(n+1)} ( \mathbf{A} \cdot \mathbf{B}) = \sum_{k = 0}^{n+1}
\left(
\begin{array}{c}
n+1 \\ 
k 
\end{array}
\right)
\mathbf{A}_t^{(n+1-k)} \cdot \mathbf{B}_t^{(k)} \;.
\end{array}
\end{eqnarray}
Thus the proof is completed.
\end{itemize}

\end{proof}

Notice that the proof is carried out for the derivative with respect to the variable $t$, however, this result applies to derivatives with respect to any variable.

\section{Operational details}\label{sec:appendix-details}
In this appendix, Fortran 90 codes of the recursive formula for obtaining time derivatives are reported. Furthermore, the set up of quadrature points through reference elements and how these are used to obtain polynomials for approximating the spatial and temporal derivatives of high-order, are provided. The numerical flux and the source term evaluation strategy is also reported. 

\subsection{Fortran codes for computing the matrix coefficients $\mathbf{D}$ and $\mathbf{C}$}\label{sec:FortranCodes}
The codes in Fortran 90 for the main expressions obtained in the section \ref{sec:simplified-CK}, are reported.

The subroutines $MATRIX\_D$ and $MATRIX\_C$, provide the matrix $D$ and $C$ corresponding to the formulas (\ref{eq:Matrix-D}) and (\ref{eq:Matrix-C}), respectively.

\begin{verbatim}
  ! ------------------------------------------------------------------------
  SUBROUTINE MATRIX_D ( l, k, i, j, A, B, DxA, DxB, D)
    ! ----------------------------------------------------------------------
    ! This subroutine computes the matrix D for the recursive simplified
    ! Cauchy-Kowalewskaya procedure in  Proposition 3.2 and Lemma 3.1. 
    ! The inputs of this subroutine are:
    !
    ! l and k which are the indices in the equation (21).
    ! i, j are the index for the space and time nodal location.
    ! DxA, DxB, the spatial derivatives of matrix A and B. 
    !
    ! This subroutine has been implemented with the following global variables: 
    ! "Accuracy" is the accuracy of the scheme, which coincides with the
    ! number of quadrature points in space.  
    ! "nGP" the number of quadrature points for time, here nGP = Accuracy-1.
    ! "NVAR" number of unknowns. 
    ! ----------------------------------------------------------------------
    DOUBLE PRECISION DxA ( NVAR, NVAR, Accuracy, nGP, Accuracy-1),       &
         & DxB ( NVAR, NVAR, Accuracy, nGP, Accuracy-1),                 &
         & A ( NVAR, NVAR, Accuracy, nGP),                               &
         & B ( NVAR, NVAR, Accuracy, nGP),                               &
         & D ( NVAR, NVAR)

    INTEGER i, j, k, l
    ! ----------------------------------------------------------------------
    ! Variables passing this point are local.
    ! ----------------------------------------------------------------------
    DOUBLE PRECISION comB, comA
    DOUBLE PRECISION DA ( NVAR, NVAR), DB ( NVAR, NVAR)
    ! ----------------------------------------------------------------------

    if (l - k - 1 < 0) then
       DB = 0.0
    elseif (l - k - 1 == 0) then
       DB = B ( :, :, i, j)
    else
       DB = DxB ( :, :, i, j , l - k - 1)                        
    end if

    !--------------------------------------------------------------------
    ! Notice that the zero-derivative of a matrix is the same matrix.
    ! Due to the form in which matrices DxA and DxB where constructed, 
    ! the last entry represents the order of the space derivative.
    !--------------------------------------------------------------------
    if (l - k  == 0) then
       DA = A ( :, :, i, j)
    else
       DA = DxA ( :, :, i, j , l - k) 
    end if
    !--------------------------------------------------------------------
    ! "FUN_COMBINATORY(n,k)" is a function which returns the combinatorial 
    ! function of n over k.
    !--------------------------------------------------------------------
    comB = FUN_COMBINATORY ( l - 2, l - k - 1)
    comA = FUN_COMBINATORY ( l - 1, l - k)
    !
    D = comB * DB - comA * DA

    ! ----------------------------------------------------------------------
  END SUBROUTINE MATRIX_D
  ! ------------------------------------------------------------------------

\end{verbatim}

\begin{verbatim}

  ! ------------------------------------------------------------------------
  SUBROUTINE MATRIX_C ( A, B, DxA, DxB, CM)
    ! ----------------------------------------------------------------------
    ! This subroutine computes the matrix "C" for the recursive simplified
    ! Cauchy-Kowalewskaya procedure in Proposition 3.2. The inputs of this 
    ! matrix are:
    ! "DxA", "DxB", the spatial derivatives of matrix A and B. These are
    ! derivatives from first order to Accuracy-th order.  
    !
    ! This subroutine has been implemented with the following global variables: 
    ! "Accuracy" is the accuracy of the scheme, which coincides with the
    ! number of quadrature points in space.  
    ! "nGP" the number of quadrature points for time, here nGP = Accuracy-1.
    ! "NVAR" number of unknowns. 
    ! ----------------------------------------------------------------------
    DOUBLE PRECISION DxA ( NVAR, NVAR, Accuracy, nGP, Accuracy-1),       &
         & DxB ( NVAR, NVAR, Accuracy, nGP, Accuracy-1),                 &
         & A ( NVAR, NVAR, Accuracy, nGP),                               &
         & B ( NVAR, NVAR, Accuracy, nGP),                               &     
         & CM ( NVAR, NVAR, Accuracy, nGP, Accuracy, Accuracy)          
    ! ----------------------------------------------------------------------
    ! Variables passing this point are local.
    ! ----------------------------------------------------------------------
    INTEGER i, j, k1, l1, m, NTIME, NSPACE

    DOUBLE PRECISION DtCM ( NVAR, NVAR, Accuracy, nGP, nGP-1, Accuracy-1, &
         & Accuracy), Dmat ( NVAR, NVAR)
    ! ----------------------------------------------------------------------

    ! ----------------------------------------------------------------------
    NTime  = nGP         ! Rename the number of quadrature points in time.
    NSpace = Accuracy    ! Rename the number of quadrature points in space.

    ! ----------------------------------------------------------------------
    ! Initialize the coefficient-matrix at zero.
    ! ----------------------------------------------------------------------
    CM = 0.0
    

    !--------------------------------------------------------------------
    ! Compute the matrices contributing to the Cauchy-Kowalewskaya functional
    ! of the simplified approach. These are the matrices which are the 
    ! matrix coefficients of spatial derivatives. See formula (29).
    !--------------------------------------------------------------------
    ! Set "C(1,1) = -A". See Proposition 3.2.
    !--------------------------------------------------------------------
    do j = 1, NTime
       do i = 1, NSpace

          CM ( :, :, i, j, 1, 1) = - A ( :, :, i, j)

       end do
    end do
    !----------------------------------------------------------------
    ! Initialize the recursive step to generate matrix coefficient C.
    !----------------------------------------------------------------     
    do k1 = 2, Accuracy - 1
       !
       do j = 1, NTime
          do i = 1, NSpace              
             !-------------------------------------------------------------
             ! The "MATRIX_D" implements the formula (21), providing "Dmat".
             !-------------------------------------------------------------             
             call MATRIX_D ( k1, k1, i, j, A, B, DxA, DxB, Dmat)
             !
             CM (  :, :, i, j, k1, k1) =  matmul (                   &
                  & CM ( :, :, i, j, k1-1, k1-1), Dmat)
          end do
       end do
       !-------------------------------------------------------------
       do l1 = 1,  k1 - 1
          !
          !----------------------------------------------------
          ! Compute the time derivatives from values at space 
          ! time-nodes.
          !----------------------------------------------------
          CALL MATRIX_TIME_GRADIENT (                          &
               & CM ( :, :, :, :, k1-1, l1),                   &
               & DtCM ( :, :, :, :, :, k1-1, l1)  )
          !----------------------------------------------------
          ! Implement the formula (29) of the paper.
          !----------------------------------------------------
          do j = 1, NTime
             do i = 1, NSpace              
                !-------------------------------------------------------
                !-------------------------------------------------------
                !                    
                CM (  :, :, i, j, k1, l1) = DtCM( :, :, i, j, 1, k1-1, l1) 
                !
                do m = l1-1, k1-1

                   if( m > 0) then
                      call MATRIX_D ( m+1, l1, i, j, A, B, DxA, DxB, Dmat)

                      CM (  :, :, i, j, k1, l1)  = CM (  :, :, i, j, k1, l1) + &
                           &   matmul (                    &
                           & CM ( :, :, i, j, k1-1, m), Dmat)
                   end if
                end do
                !
                !----------------------------------------------------
             end do
          end do
          !-------------------------------------------------------------
       end do !  End loop: "do l1 = 1, k1 - 1"
       !----------------------------------------------------------------
    end do ! End loop: "do k1 = 2, Accuracy - 1".

    ! ----------------------------------------------------------------------
  END SUBROUTINE MATRIX_C
  ! ------------------------------------------------------------------------

\end{verbatim}

\subsection{Approximation of space and time derivatives}\label{sec:set-up-nodes-approx}
The interpolation polynomials obtained here can be straightforward generalized to scalar, vector and matrix functions. So, we provide the polynomials for a generic function $f(\xi)$.
To carry out interpolations, we have used the following strategy. 

For interpolation in space, we first cast an interval $ [ x_{i-\frac{1}{2}}, x_{i+\frac{1}{2}}]$ into $ [-\frac{1}{2}, \frac{1}{2}]$ by the change of variable $x = x_{i -\frac{1}{2}} + (\xi - \frac{1}{2}) \Delta x$. Second, we consider $\xi_j = -\frac{1}{2} +  \frac{(j-1)}{ M}$ with $j=1,...,M+1$.  So, the polynomial interpolation in space for $M+1$ order has the form $P(x) = \sum_{k = 0 }^{M} a_{M+1, k } \cdot \xi^{k}$. The coefficients $a_{M+1,k}$ are given in the table \ref{Table:interpolation-space} for $M = 1, 2, 3, 4$, we have used the convention $f_j = f(\xi_j)$. Because of the change of variable, each space derivative of order $l$ needs to be scaled by $ \Delta x^{-l}$. 

To obtain interpolation in time, we cast an interval $[t^n, t^{n+1}]$ into the reference element $ [0 ,1]$ by the change of variable $t = t^n + \tau \Delta t$. Then we construct an interpolation polynomial from the Gaussian points, $\tau_j$ in $[0, 1]$ with $j=1,...,n_{GP}$. Interpolation in time are only needed for accuracy higher than 3. In such a case. The interpolation polynomial has the form $ T(\tau) = \sum_{k=0}^{M-1} b_{M+1, k} \cdot \tau^k$.  The coefficients $b_{M+1,k}$ are shown in the table \ref{Table:interpolation-time} for $M = 2, 3 , 4$. We have used the convention $f_j = f(\tau_j)$. Because of the change of variable, each time derivative of order $l$ needs to be scaled by $ \Delta t^{-l}$.

\begin{table}
\begin{center}
\begin{tabular}{l}
\hline
Second order ($M=1$). \\
\hline
$a_{2,0} = f_2 $ \\
$a_{2,1} = (f_2 - f_1)$ \\

\hline
Third order ($M=2$). \\
\hline
$ a_{3,0} = f_2 $  \\

$ a_{3,0} =(f_3 - f_1) $ \\
 
$ a_{3,0} = 2 ( f_3 - 2  f_2 + f_1 )  $ \\

\hline
Fourth order ($M=3$). \\
\hline
 $ a_{4,0} =  - ( f_4 - 9 f_3 - 9 f_2 + f_1 ) / 16 $ \\ 
 
 $ a_{4,1} =  + ( (- f_4 + 27  f_3 - 27  f_2 + f_1 )) / 8 $ \\

 $ a_{4,2} = ( ( 9  f_4 - 9 f_3 - 9 f_2 + 9 f_1 ) \xi^2 ) / 4 $ \\
 
 $ a_{4,3} = - ( (- 9 f_4 + 27 f_3 - 27 f_2 + 9 f_1 ) \xi^3 ) / 2 $ \\

\hline
Fifth order ($M=4$). \\
\hline

 $ a_{5,0} =   f_3 $ \\ 
 
 $ a_{5,1} =  + ( ( - f_5 + 8  f_4 - 8  f_2 + f_1) ) / 3  $ \\

 $ a_{5,2} = - ( ( 2  f_5 - 32  f_4 + 60  f_3 - 32  f_2 + 2  f_1) ) / 3 $ \\
 
 $ a_{5,3} =  - ( ( - 16  f_5 + 32  f_4 - 32  f_2 + 16  f_1) ) / 3  $ \\

 $ a_{5,4} =   ( ( 32 f_5 - 128  f_4 + 192  f_3 - 128  f_2 + 32  f_1) )/3 $ \\

\hline

\end{tabular}
\end{center}
\caption{Coefficients $a_{M+1,k}$ for interpolation in space, $P(x) = \sum_{k = 0 }^{M} a_{M+1, k } \cdot \xi^{k}$.}\label{Table:interpolation-space}
\end{table}

\begin{table}
\centering
\rotatebox{90}{%

\begin{tabular}{l}

\hline

Third order ($M=2$). \\

\hline
\\

$ b_{3,0} = ( - \sqrt{3} f_2 + f_2 + \frac{ ( \sqrt{3} + 1 )  f_1 ) }{ 2} $ \\
\\
$ b_{3,1} = (  \sqrt{3}  f_2 - \sqrt{3} f_1 )  ) $ \\
\\
\hline

Fourth order ($M=3$). \\

\hline
\\
$ b_{4,0} =  (-\sqrt{15}  f_3 + 5 f_3 - 4 f_2 + (\sqrt{15} + 5) f_1) / 6$ \\
\\
$ b_{4,1} =  -\frac{\left( -\sqrt{15}\, {f_{3}}+10 {f_{3}}-20 {f_{2}}+\left( \sqrt{15}+10\right) \, {f_{1}}\right) }{3} $ \\
\\
$ b_{4,2} =  \frac{\left( 10 {f_{3}}-20 {f_{2}}+10 {f_{1}}\right) }{3} $ \\
\\
\hline

Fifth order ($M=4$). \\

\hline

\\
$ b_{5,0} = - 0.1139171962819898 f_4 + 0.4007615203116506 f_3 - 0.8136324494869276 f_2 + 1.526788125457266 f_1$ \\
\\
$ b_{5,1} = 2.15592710364526 f_4  -7.41707042146264 f_3 + 13.80716692568958 f_2 -8.546023607872199 f_1 $ \\
\\
$ b_{5,2} = - 7.935761849944949 f_4  +24.99812585921913 f_3 - 31.38822236344606 f_2 + 14.32585835417188 f_1 $ \\
\\
$ b_{5,3} = 7.420540068038946 f_4 \cdot - 18.79544940755506 f_3  + 18.79544940755506 f_2 - 7.420540068038946 f_1  
 $ \\
\\
\hline

\end{tabular}
}%
\caption{Coefficients $b_{M+1,k}$ for interpolation in time, $ T(\tau) = \sum_{k=0}^{M-1} b_{M+1, k} \cdot \tau^k$.}\label{Table:interpolation-time}
\end{table}

\subsection{Evaluation of the numerical flux and source term}\label{sec:numerical-flux-source}
Regarding the evaluation of (\ref{eq:flux-source}). The numerical flux can be easily evaluated as 
$ \mathbf{F}_{i+\frac{1}{2} }  = \sum_{j=1}^{n_{GP} } \omega_{j}\mathbf{F}_h( \mathbf{Q}_i(\xi_{M+1}, \tau_j),\mathbf{Q}_{i+1}(\xi_1, \tau_j) )  $, where $\omega_j$ corresponds to the $j$th Gaussian weight in $[0,1]$. 
Whereas, the source terms, can be easily obtained from the interpolation in space and quadrature points in time. Table \ref{Table:interpolation-source} shows the form in which the source is computed for several orders of accuracy. 

\begin{table}
\centering
\rotatebox{90}{%

\begin{tabular}{l}
\hline
Second order ($M=1$). \\
\hline
\\
$ \mathbf{S}_i  = \frac{1}{2} \sum_{j=1}^{n_{GP} } \omega_{j}( 
                                                \mathbf{S}(\mathbf{Q}_i(\xi_2, \tau_j) 
                                              + \mathbf{S}(\mathbf{Q}_i(\xi_1, \tau_j) )  $ \\
                                              
\\
\hline
Third order ($M=2$). \\
\hline
\\ 
$ \mathbf{S}_i  = \frac{1}{6} \sum_{j=1}^{n_{GP}} \omega_{j} ( 
                                                \mathbf{S}(\mathbf{Q}_i(\xi_3, \tau_j) 
                                            + 4 \mathbf{S}(\mathbf{Q}_i(\xi_2, \tau_j) 
                                            +   \mathbf{S}(\mathbf{Q}_i(\xi_1, \tau_j))  $ \\
            
\\
\hline
Fourth order ($M=3$). \\
\hline
\\

$ \mathbf{S}_i  = \frac{1}{8} \sum_{j=1}^{n_{GP}} \omega_{j} ( 
                                                \mathbf{S}(\mathbf{Q}_i(\xi_4, \tau_j) 
                                            + 3 \mathbf{S}(\mathbf{Q}_i(\xi_3, \tau_j) 
                                            + 3 \mathbf{S}(\mathbf{Q}_i(\xi_2, \tau_j))
                                            +   \mathbf{S}(\mathbf{Q}_i(\xi_1, \tau_j))  $ \\

\\ 
\hline
Fifth order ($M=4$). \\
\hline
\\

$ \mathbf{S}_i  = \frac{1}{90} \sum_{j=1}^{n_{GP}} \omega_{j} ( 
                                               7 \mathbf{S}(\mathbf{Q}_i(\xi_5, \tau_j)
                                            + 32 \mathbf{S}(\mathbf{Q}_i(\xi_4, \tau_j) 
                                            + 12 \mathbf{S}(\mathbf{Q}_i(\xi_3, \tau_j) 
                                            + 32 \mathbf{S}(\mathbf{Q}_i(\xi_2, \tau_j))
                                            +  7 \mathbf{S}(\mathbf{Q}_i(\xi_1, \tau_j))  $ \\               
 
\\
\hline

\end{tabular}
}%
\caption{Numerical source term. Here, $\omega_j$ are the corresponding Gaussian weights.}\label{Table:interpolation-source}
\end{table}

\end{document}